\documentclass[11pt]{article}
\setlength{\oddsidemargin}{-0.5cm}
\setlength{\textwidth}{16.4cm}
\setlength{\topmargin}{-0.65cm}
\setlength{\textheight}{21cm}

\usepackage{tikz}
\usetikzlibrary{arrows.meta,arrows,decorations.pathreplacing}
\usepackage[linktocpage]{hyperref}

\usepackage{amsmath, amsfonts, amsthm, verbatim,
amssymb, 
%showlabels,
dsfont,color}

\usepackage[utf8x]{inputenc}
\usepackage{lmodern}
\usepackage{mathtools}

\newcommand{\dd}{\mathrm{d}}
\newcommand{\R}{\mathbb{R}}

\newcommand{\Z}{\mathbb{Z}}
\newcommand{\E}{\mathbf{E}}
\newcommand{\p}{\mathbf{P}}
\renewcommand{\P}{\mathbf{P}}

\newcommand{\bone}{\mathds 1}

\newcommand{\trint}{\int \hspace*{-5pt} \int \hspace*{-5pt} \int}

\theoremstyle{plain}
\newtheorem{lemma}{Lemma}
\newtheorem{theorem}{Theorem}
\newtheorem{proposition}{Proposition}
\newtheorem{corollary}{Corollary}

\theoremstyle{remark}
\newtheorem{example}{Example}
\newtheorem{remark}{Remark}

\newcommand{\calb}{{\cal B}}

%Abkuerzende Befehle fuer verschiedene griechische Buchstaben
\newcommand{\al}{{\alpha}}
\newcommand{\la}{{\lambda}}
\newcommand{\La}{{\Lambda}}
\newcommand{\eps}{{\varepsilon}}

\newcommand{\ov}{\overline}

\newcommand{\ii}{\mathrm{i}}

\newcommand{\bthm}{\begin{theorem}}
\newcommand{\ethm}{\end{theorem}}

\newcommand{\bcor}{\begin{corollary}}
\newcommand{\ecor}{\end{corollary}}

\newcommand{\blem}{\begin{lemma}}
\newcommand{\elem}{\end{lemma}}

\newcommand{\bprop}{\begin{proposition}}
\newcommand{\eprop}{\end{proposition}}

\newcommand{\bdf}{\begin{definition}}
\newcommand{\edf}{\end{definition}}

\newcommand{\bex}{\begin{example}}
\newcommand{\eex}{\end{example}}

\newcommand{\brem}{\begin{remark}}
\newcommand{\erem}{\end{remark}}

\newcommand{\bpr}{\begin{proof}}
\newcommand{\epr}{\end{proof}}

\newcommand{\benu}{\begin{enumerate}}
\newcommand{\eenu}{\end{enumerate}}

\newcommand{\beq}{\begin{equation}}
\newcommand{\eeq}{\end{equation}}

\newcommand{\bit}{\begin{itemize}}
\newcommand{\eit}{\end{itemize}}

\numberwithin{equation}{section}

\title{Extremes of the stochastic 
heat equation with additive Lévy noise}

\author{
Carsten Chong\thanks{
Department of Statistics, Columbia University, 1255 Amsterdam Avenue, 
New York, NY 10027, USA, 
e-mail: carsten.chong@columbia.edu} \and P\'eter 
Kevei\thanks{{Bolyai Institute, University of Szeged,
Aradi v\'ertan\'uk tere 1, 6720 Szeged, Hungary, 
e-mail: kevei@math.u-szeged.hu}}
}

\begin{document}

\date{}

\maketitle

\begin{abstract}
We analyze the spatial asymptotic properties of the solution to the stochastic
heat equation driven by an additive L\'evy space-time white noise. 
For fixed time $t > 0$ and space $x \in \R^d$ we determine the exact
tail behavior of the solution both for light-tailed and for heavy-tailed 
L\'evy jump measures. Based on these asymptotics we determine for any 
fixed time $t> 0$ the almost-sure growth rate of the solution as 
$|x| \to \infty$. \\

 \emph{MSC2020 subject classifications:}  Primary:   60H15; 60F15; 60G70; secondary: 60G17, 60G51.
 
\emph{Keywords:} almost-sure asymptotics; integral test; Poisson noise; regular variation; stable noise; stochastic PDE.
\end{abstract}

\section{Introduction} \label{sect:intro}

We consider the stochastic heat equation (SHE) on $\R^d$ driven 
by an additive L\'evy space-time  white noise  $\dot \Lambda$, 
with zero initial condition, given by
\beq\label{eq:SHE}
\begin{split}
\partial_t Y(t,x) &= \frac{\kappa}{2} \Delta Y(t,x) + 
\dot\Lambda (t,x),\qquad 
(t,x)\in(0,\infty)\times \R^d,\\
Y(0, \cdot)&= 0,
\end{split}
\eeq
where $\Delta$ stands for the Laplacian, 
$\kappa > 0$ is the diffusion constant,
and the measure $\Lambda$ is given by
\begin{equation} \label{eq:def-Lambda}
\Lambda(\dd t, \dd x)  =  m\, \dd t \dd x + 
\int_{(1,\infty)} z \, \mu(\dd t, \dd x, \dd z) 
+ \int_{(0,1]} z \, ( \mu - \nu) (\dd t, \dd x, \dd z).
\end{equation}
Here, $m \in \R$ and $\mu$ is a Poisson random measure on 
$(0, \infty) \times \R^d \times (0,\infty)$ whose intensity measure $\nu$ takes the form
$\nu ( \dd t, \dd x, \dd z) = \dd t\, \dd x\, \lambda(\dd z)$,
with a L\'evy measure   satisfying $\int_{(0,\infty)} (1\wedge z^2)\,\la(\dd z)<\infty$.
To exclude trivialities, we always assume that $\lambda$ is not identically zero.

In this  case the mild solution to \eqref{eq:SHE} can be written explicitly 
in the form
\begin{equation} \label{eq:defY}
Y(t,x) = 
\int_0^t \int_{\R^d} g(t-s, x-y)\,  \Lambda(\dd s, \dd y),
\end{equation}
where
\begin{equation} \label{eq:def-heat}
g(t,x) = \frac{1}{(2 \pi \kappa t)^{d/2}} e^{-\frac{|x|^2}{2 \kappa t}}, \qquad
t > 0,~ x \in \R^d,
\end{equation}
is the heat kernel. 
In our earlier paper \cite{CK2} we obtained a complete description of the  
almost-sure growth behavior of $Y(t,x)$ at a fixed spatial point $x \in \R^d$ as $t \to \infty$. In particular, $t\mapsto Y(t,x)$ satisfies a weak law of large numbers but surprisingly violates the strong law of large numbers. In the present paper we continue these investigations and 
analyze the almost-sure behavior for fixed time $t > 0$, as $|x| \to \infty$. 

\smallskip
To this end,  we determine  in Section~\ref{sect:tail}
the exact tail asymptotics for $Y(t,x)$ 
both for light-tailed and for heavy-tailed L\'evy measures. Note that since the heat kernel is singular at the origin, the results of \cite{Fasen05,Fasen06,Rootzen78} for moving-average processes with bounded kernels do not apply.
In \cite{CK1} we proved that for any jump measure $\lambda$, the $(1+\frac 2d)$-moment of $Y(t,x)$ is infinite, which suggests a power-law tail behavior.
In Theorem \ref{thm:Ytail} we show that this is indeed the case, regardless of whether the noise itself is light- or heavy-tailed.
Section \ref{sect:spatial} contains the tail asymptotics for 
$\sup_{x \in A} Y(t,x)$, where $A$ is a bounded Borel set. Based on these results, 
we determine in Section \ref{sect:growth} the almost-sure growth behavior of $Y(t,x)$ as $|x| \to \infty$.
The behavior is very different from the behavior of the Gaussian case, in which
\beq\label{eq:Gauss}
\limsup_{\lvert x \rvert\to\infty} \dfrac{Y(t,x)}{(\log |x|)^{1/2}} = 
\biggl ( \frac{4t}{\pi\kappa} \biggr)^{\frac14} 
\eeq
almost surely; see \cite[Eq.~(6.3)]{Khoshnevisan17}. 
All the proofs are gathered together in Section \ref{sect:proofs}.
In our companion paper \cite{islands_mult} we address the same questions for the SHE with multiplicative Lévy noise.

Let us end this introductory section by stating
 necessary and sufficient conditions  in terms of the jump 
measure $\lambda$ for the existence of the integral \eqref{eq:defY}.
To our best knowledge, this result is new. While many works have  studied sufficient conditions for  existence \cite{Balan14, Chong, Chong1, SLB98}, necessary and sufficient conditions have only been derived for multiplicative noise \cite{Berger21b} or for specific types of noises such as $\al$-stable noise \cite{Dalang19}. 
Introduce the measure $\eta$ as 
\begin{equation} \label{eq:def-eta}
\eta (B) = \nu \Bigl( \{ (s,y,z) : \, s \leq t, \, g(s,y) z  \in B \} \Bigr),
\end{equation}
where $B \subseteq (0,\infty)$ is a Borel set. %It turns out that if \eqref{eq:existence-cond} hold then $\eta$ is a L\'evy measure.

\begin{theorem} \label{thm:existence} Suppose that $\La$ is of the form \eqref{eq:def-Lambda}.

(i)
The integral defining $Y(t,x)$ in \eqref{eq:defY} exists if and only if (iff)
\begin{equation} \label{eq:existence-cond}
\int_{(1,\infty)} (\log z)^{d/2} \,\lambda( \dd z) < \infty
\qquad \text{and } \qquad
\begin{cases}
\int_{(0,1]} z^2 \,\lambda(\dd z) < \infty & \text{if }d = 1, \\
\int_{(0,1]} z^2 \lvert \log z\rvert\, \lambda(\dd z) < \infty &\text{if } d = 2, \\
\int_{(0,1]} z^{1+2/d}\, \lambda(\dd z) < \infty&\text{if } d \geq 3. \\
\end{cases}
\end{equation}
In this case, $\eta$ is a Lévy measure and $Y(t,x)$  is infinitely divisible with 
characteristic function
\begin{equation} \label{eq:charfunc}
\E [e^{\ii \theta Y(t,x) }] = 
\exp \left\{ \ii \theta A +
\int_{(0,\infty)} 
\left( e^{\ii \theta u} - 1 - \ii \theta u \bone(u \leq 1) \right) \eta ( \dd u)
\right\},
\end{equation}
where $\bone$ stands for the indicator function and
$A \in \R$ is some explicit constant.

(ii) The integral 
\begin{equation} \label{eq:uncomp}
\int_0^t \int_{\R^d} \int_{(0,\infty)} g(t-s,x-y) z\, \mu(\dd s, \dd y, \dd z)
\end{equation}
exists iff
\begin{equation} \label{eq:existence-cond-uncomp}
\int_{(1,\infty)} (\log z)^{d/2} \,\lambda( \dd z) < \infty \qquad \text{and} \qquad
\int_{(0,1]} z\, \lambda(\dd z) < \infty.
\end{equation}
\end{theorem}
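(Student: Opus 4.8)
The plan is to reduce the existence question to the classical theory of integration of deterministic functions against infinitely divisible, independently scattered random measures, due to Rajput and Rosi\'nski. A change of variables $(s,y)\mapsto(t-s,x-y)$, which preserves both $\dd s\,\dd y$ and the domain $(0,t)\times\R^d$, shows the problem is independent of $x$ and lets me work with the kernel $g(s,y)$ directly. Introducing the pushforward measure $\rho(B):=\Leb\{(s,y)\in(0,t)\times\R^d:g(s,y)\in B\}$, the measure $\eta$ is exactly the image of $\rho\otimes\la$ under $(v,z)\mapsto vz$. Since $\La$ has no Gaussian part, drift $m$ and L\'evy measure $\la$, the theory says $Y(t,x)$ exists iff a L\'evy-type condition $\int_{(0,\infty)}(1\wedge u^2)\,\eta(\dd u)<\infty$ and a drift condition $\int_{(0,\infty)}|U(v)|\,\rho(\dd v)<\infty$ both hold, where $U(r)=rm+r\int_{(0,\infty)}z(\bone(z\le 1/r)-\bone(z\le 1))\,\la(\dd z)$. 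The infinite divisibility of $Y(t,x)$ with L\'evy measure $\eta$, no Gaussian part and an explicit drift $A$, hence \eqref{eq:charfunc}, is then the standard output of the same theory.

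Everything therefore hinges on the two-sided asymptotics of $\rho$, which I would compute from the level sets of the heat kernel. Writing $g(s,y)>a$ as $|y|^2<2\kappa s(-\tfrac d2\log(2\pi\kappa s)-\log a)$ and integrating the resulting ball volumes over $s$, the substitution $s=(2\pi\kappa)^{-1}a^{-2/d}e^{-w}$ collapses the computation and yields, as $a\to\infty$,
\[
\rho((a,\infty))\sim C_\infty\,a^{-(1+2/d)},
\]
the exponent $1+2/d$ being dictated by the parabolic scaling of $g$. At the other end, for $a\to 0$ the constraint $s\le t$ becomes binding, the radius is $\approx\sqrt{2\kappa s\log(1/a)}$, and integrating over $s\in(0,t)$ gives
\[
\rho((a,\infty))\sim C_0\,(\log(1/a))^{d/2}.
\]
These two tails are the source of the two integrability conditions in \eqref{eq:existence-cond}.

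With these asymptotics in hand, the remaining step is bookkeeping. Splitting $F(z):=\int_{(0,\infty)}(1\wedge(vz)^2)\,\rho(\dd v)=z^2\int_{\{v\le 1/z\}}v^2\,\rho(\dd v)+\rho((1/z,\infty))$, the large-$z$ behavior is governed by the second term via the $a\to 0$ tail, giving $F(z)\sim C_0(\log z)^{d/2}$ and hence the condition $\int_{(1,\infty)}(\log z)^{d/2}\,\la(\dd z)<\infty$. For small $z$ the competition between the two terms depends on the convergence of $\int^{1/z}v^{-2/d}\,\dd v$: it converges for $d=1$ (so $F(z)\sim Cz^2$), diverges logarithmically for $d=2$ (so $F(z)\sim Cz^2|\log z|$), and diverges polynomially for $d\ge 3$ (so $F(z)\sim Cz^{1+2/d}$), which reproduces exactly the three cases. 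Since all estimates are two-sided, the equivalence of $\int(1\wedge u^2)\,\eta(\dd u)<\infty$ with \eqref{eq:existence-cond} follows. One then checks that $\int_{(0,\infty)}|U(v)|\,\rho(\dd v)<\infty$ is implied by these same conditions: the $m$-term contributes $\int v\,\rho(\dd v)=t<\infty$, and the remaining terms, after interchanging integrals by Tonelli, are bounded by $\int_{(0,1]}z^{1+2/d}\,\la(\dd z)$ and $\int_{(1,\infty)}(\log z)^{d/2-1}\,\la(\dd z)$, both dominated by \eqref{eq:existence-cond}, so no new constraints appear. Part (ii) is the same argument with the criterion that a nonnegative Poisson integral $\int h\,\dd\mu$ is a.s.\ finite iff $\int(1\wedge h)\,\dd\nu<\infty$; here this reads $\int(1\wedge u)\,\eta(\dd u)<\infty$, and the analogous splitting of $G(z):=\int(1\wedge vz)\,\rho(\dd v)$ gives $G(z)\sim C_0(\log z)^{d/2}$ for large $z$ and $G(z)\sim Cz$ for small $z$ in every dimension, yielding \eqref{eq:existence-cond-uncomp}.

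The main obstacle I anticipate is the asymptotic analysis of $\rho$: the two tails must be established with matching upper and lower bounds, not merely orders of magnitude, and uniformly enough to survive integration against an arbitrary L\'evy measure $\la$; moreover the small-$z$ analysis of $F$ must be carried out separately in the three dimensional regimes where the power $v^{-2/d}$ changes its integrability. The dimension-dependent logarithmic factors, in particular the $d=2$ borderline, are where the bookkeeping requires the most care.
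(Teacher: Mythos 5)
Your proposal is correct and takes essentially the same route as the paper: both parts are reduced to the Rajput--Rosi\'nski integrability criteria (for the compensated integral) and to the criterion $\trint (1\wedge g(s,y)z)\,\dd s\,\dd y\,\la(\dd z)<\infty$ for the Poisson integral, and in both arguments the conditions are extracted from the level sets of the heat kernel, whose two tail regimes --- order $a^{-1-2/d}$ as $a\to\infty$ and $(\log(1/a))^{d/2}$ as $a\to0$ --- are precisely what the paper computes via the partition $A_1,A_2,B_1,B_{21},B_{22}$ and the incomplete gamma function (cf.\ \eqref{eq:g-level} and \eqref{eq:H2-calc-aux}). Your pushforward measure $\rho$ is a clean repackaging of that same computation rather than a different idea, and your dimension-split analysis of $F(z)$ for small $z$ (convergent, logarithmically divergent, and polynomially divergent $\int^{1/z}v^{-2/d}\,\dd v$ for $d=1$, $d=2$, $d\geq3$) matches the paper's treatment of \eqref{eq:RR-cond2} on $B_1\cup B_{21}$ term for term.
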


\brem Note that \eqref{eq:existence-cond} is identical to the necessary and sufficient condition found in \cite{Berger21b} for the existence of solutions to the SHE with multiplicative noise in dimensions $d=1,2$ but is weaker than the necessary condition found in \cite[Prop.\ 2.2]{Berger21b} for $d\geq3$. In other words, if $d\geq3$, there are Lévy noises for which the SHE with additive noise has a solution but the SHE with multiplicative noise does not.
\erem

Whenever $\int_{(0,1]} z \,\lambda(\dd z) < \infty$,  there is no need 
for compensation, so we assume without loss of generality that $m = \int_{(0,1]} z\, \lambda(\dd z)$. In this case,  
\begin{equation} \label{eq:defY3}
\begin{split}
Y(t,x)  = \int_0^t \int_{\R^d} \int_{(0,\infty)} g(t-s, x-y) z \, 
\mu (\dd s, \dd y, \dd z)  = \sum_{\tau_i \leq t} g(t-\tau_i, x - \eta_i) \zeta_i,
\end{split}
\end{equation}
where $(\tau_i, \eta_i, \zeta_i)$ are the points of the Poisson random measure
$\mu$.
In what follows we always assume that \eqref{eq:existence-cond} holds.

\section{Tail asymptotics} \label{sect:tail}

Since $Y(t,x)$ is infinitely divisible, its tail behavior
is the same as the tail behavior of its L\'evy measure $\eta$, whenever 
the tail is subexponential. This result was proved by  \cite{EGV}
for nonnegative infinitely divisible random variables and by 
\cite{Pakes, Pakes2} in the general case. Therefore, we need to
determine the tail of the L\'evy measure $\eta$ in \eqref{eq:def-eta}.

For $\gamma > 0$
introduce the moments and truncated moments of $\lambda$ as
\begin{equation} \label{eq:M-def}
m_\gamma(\lambda) = \int_{(0,\infty)} z^\gamma\, \lambda(\dd z) \qquad \text{and}\qquad
M_\gamma(x) = \int_{(0,x]} z^\gamma \,\lambda(\dd z).
\end{equation}

\begin{lemma} \label{lemma:eta-form}
Let $D = ( 2 \pi \kappa t)^{d/2}$. For any $r > 0$,
\begin{equation} \label{eq:eta-tail}
\overline \eta(r)  = \eta ((r,\infty) ) = 
r^{-(1 + 2/d)}
\frac{ d^{d/2}}{\pi \kappa (d + 2)^{d/2 +1} \Gamma(\frac d2+1)} 
\int_0^\infty e^{-u} u^{d/2} M_{1+2/d}(Dr e^{ud/(d+2)}) \,\dd u.
\end{equation} 
\end{lemma}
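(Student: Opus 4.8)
The plan is to evaluate $\overline\eta(r)$ directly from the definition \eqref{eq:def-eta}, peeling off the three integrals one at a time and applying Tonelli's theorem at the end to recognise the truncated moment $M_{1+2/d}$. Writing \eqref{eq:def-eta} out explicitly,
\[
\overline\eta(r) = \int_0^t\!\!\int_{\R^d}\!\!\int_{(0,\infty)} \bone\bigl(g(s,y)\,z > r\bigr)\,\lambda(\dd z)\,\dd y\,\dd s ,
\]
the first step is the spatial integral. For fixed $s,z$ the event $g(s,y)z>r$ is equivalent to $|y|^2 < 2\kappa s\,\log\frac{z}{r(2\pi\kappa s)^{d/2}}$, i.e.\ $y$ ranges over a Euclidean ball; its Lebesgue measure is $\omega_d(2\kappa s)^{d/2}\bigl(\log\frac{z}{r(2\pi\kappa s)^{d/2}}\bigr)_+^{d/2}$, where $\omega_d=\pi^{d/2}/\Gamma(\frac d2+1)$ is the volume of the unit ball and $(\cdot)_+$ denotes the positive part.

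The second step is the time integral. For fixed $z$ I would substitute $s=\frac{1}{2\pi\kappa}(z/r)^{2/d}\,e^{-2v/d}$, a change of variables tailored so that the logarithm collapses to $\log\frac{z}{r(2\pi\kappa s)^{d/2}}=v$ while $s^{d/2}\,\dd s$ becomes an exponential weight. Positivity of the logarithm forces $s<s_1(z):=\frac{1}{2\pi\kappa}(z/r)^{2/d}$, whereas the domain of integration imposes $s\le t$, so the effective upper endpoint is $t\wedge s_1(z)$; in the $v$-variable this turns into a lower endpoint $v^*=\bigl(\frac d2\log(s_1(z)/t)\bigr)_+$. After the further rescaling $u=(1+2/d)v$ and using $2\pi\kappa t=D^{2/d}$, the lower endpoint becomes exactly $u^*(z)=\bigl(\frac{d+2}{d}\log\frac{z}{Dr}\bigr)_+$, and the time integral reduces to a constant multiple of $(z/r)^{1+2/d}\int_{u^*(z)}^\infty e^{-u}u^{d/2}\,\dd u$.

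The third and decisive step is to interchange the surviving $z$- and $u$-integrations by Tonelli (all integrands are nonnegative). Since the constraint $u\ge u^*(z)$ is equivalent to $z\le Dr\,e^{ud/(d+2)}$, the inner integral in $z$ becomes $\int_{(0,\,Dr e^{ud/(d+2)}]} z^{1+2/d}\,\lambda(\dd z)=M_{1+2/d}\bigl(Dr\,e^{ud/(d+2)}\bigr)$, which is precisely the form appearing in \eqref{eq:eta-tail}.

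I expect the main difficulty to be bookkeeping rather than conceptual. One must track the truncation at $s=t$ carefully, since it is exactly this truncation that makes the threshold inside $M_{1+2/d}$ depend on $u$ through the factor $e^{ud/(d+2)}$; and one must collect the several constants — $\omega_d$, the Jacobians of the two substitutions, and the powers of $2\pi\kappa$ and $2\kappa$ — and verify that they combine into $d^{d/2}/(\pi\kappa(d+2)^{d/2+1}\Gamma(\frac d2+1))$. A final point worth checking is that the positive-part notation $(\cdot)_+$ correctly merges the two regimes $z\le Dr$ and $z>Dr$, so that no case distinction remains in the closed-form answer \eqref{eq:eta-tail}.
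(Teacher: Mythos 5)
Your proposal is correct and follows essentially the same route as the paper: the level-set computation of $g$ giving the ball volume $v_d H_2(s,r/z)^d$, the logarithmic time substitution yielding an incomplete gamma function (your change of variables $s = s_1(z)e^{-2v/d}$ is the paper's $v=\log(H_1(r/z)/s)$ up to a factor of $\tfrac d2$), and the final Tonelli interchange converting the constraint $u \geq \tfrac{d+2}{d}\log\tfrac{z}{Dr}$ into the truncation $z \leq Dr\,e^{ud/(d+2)}$ inside $M_{1+2/d}$. The bookkeeping works out exactly as you anticipated, with the constants collapsing to $d^{d/2}/\bigl(\pi\kappa(d+2)^{d/2+1}\Gamma(\tfrac d2+1)\bigr)$ and the positive-part convention correctly merging the regimes $z\leq Dr$ and $z>Dr$.
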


From the representation above we immediately see that 
as soon as $m_{1+2/d}(\lambda) < \infty$, then $\overline \eta(r)
\sim c \,r^{-1-2/d}$, for some $c > 0$. We can determine the tail
even if this moment condition does not hold, provided that $\overline \lambda$
is regularly varying.

In the following, the class of regularly varying functions with index $\rho \in \R$
is denoted by $\mathcal{RV}_\rho$. For general theory on regular
variation we refer to  \cite{BGT}.
Write $\overline \lambda(r) = \lambda((r,\infty))$.
By Karamata's theorem, for $\al>0$,
$\overline \lambda \in \mathcal{RV}_{-\alpha}$ 
iff the truncated moment $M_{1+2/d}$ in \eqref{eq:M-def} 
is also regularly varying. However, for $\alpha = 0$,
the latter holds iff $\overline \lambda $ belongs to the 
de Haan class (see e.g.~\cite[Thm.\ 3.7.1]{BGT}).
Therefore, it is more difficult to determine the 
asymptotics of $\overline \eta$ for $\alpha = 0$, and in fact the 
result itself is surprising.

\begin{lemma} \label{lemma:eta-tail} Let $\la$ satisfy \eqref{eq:existence-cond}.

(i) Assume that $m_{1+2/d}(\lambda) < \infty$. Then 
\[
\overline \eta(r) \sim 
r^{-1 -2/d}
\frac{ d^{d/2}}{\pi \kappa (d + 2)^{d/2 +1} } m_{1+2/d}(\lambda),
\qquad r \to \infty.
\]

(ii) Assume that $\overline \lambda (r) = \ell(r) r^{-\alpha}$ 
for $\alpha \in (0,1+\frac 2d]$, where $\ell$ is slowly varying,
and if $\alpha = 1 + \frac 2d$, further assume that 
$\int_1^\infty \ell(u) u^{-1} \,\dd u = \infty$.
Then as $r \to \infty$,
\[
\overline \eta (r) \sim 
\begin{dcases}
\ell(r) r^{-\alpha}
\frac{D^{1+2/d-\alpha}}{d \pi \kappa \alpha^{d/2} (1+\tfrac 2d - \alpha)}
&\text{if } \alpha < 1 +\tfrac 2d, \\
L(r) r^{-1-2/d}
(d \pi \kappa (1+\tfrac2d)^{d/2})^{-1}
& \text{if } \alpha = 1 + \tfrac 2d,
\end{dcases}
\]
where 
\begin{equation} \label{eq:def-L}
L(r) = \int_1^r \ell(u) u^{-1}\, \dd u.
\end{equation}

(iii)
Assume that $\al=0$ and $\overline \lambda(x) = \ell(x)$ is slowly varying. Then as $r \to \infty$,
\[
\overline \eta(r) \sim  L_0(r) 
\frac{D^{1+2/d}}{2 \pi \kappa \Gamma(\frac d2+1) ( 1 + \frac 2d)},
\]
where 
\[
L_0(r) 
:= \int_1^\infty \ell(ry) y^{-1} (\log y)^{d/2-1}  \,\dd y
\]
is slowly varying and 
$L_0(r) / \ell(r) \to \infty$ as $r \to \infty$.
\end{lemma}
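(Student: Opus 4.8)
The plan is to take the exact formula for $\overline\eta$ from Lemma~\ref{lemma:eta-form} as the starting point and, in every regime, to reduce the problem to the asymptotics of
\[
I(r)=\int_0^\infty e^{-u}u^{d/2}M_{1+2/d}\!\bigl(Dr\,e^{ud/(d+2)}\bigr)\,\dd u,
\]
since $\overline\eta(r)$ equals $I(r)$ times the explicit prefactor $r^{-(1+2/d)}d^{d/2}\bigl(\pi\kappa(d+2)^{d/2+1}\Gamma(\tfrac d2+1)\bigr)^{-1}$. Throughout I abbreviate $\gamma=1+\tfrac2d$ and $\beta=\tfrac d{d+2}$ and record the identity $\beta\gamma=1$, which is what makes the various exponents combine. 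Part (i) is immediate: if $m_\gamma(\lambda)<\infty$ then $M_\gamma$ is bounded and increases to $m_\gamma(\lambda)$, so dominated convergence gives $I(r)\to m_\gamma(\lambda)\,\Gamma(\tfrac d2+1)$, and the $\Gamma(\tfrac d2+1)$ cancels against the prefactor.

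For part (ii) with $\alpha<\gamma$, I would use Karamata's theorem in the form $M_\gamma(x)\sim\frac{\alpha}{\gamma-\alpha}x^{\gamma-\alpha}\ell(x)$, substitute $x=Dr\,e^{u\beta}$, and pull out $\ell(r)$ by the uniform convergence theorem for slowly varying functions; Potter's bounds justify the passage to the limit because $\beta(\gamma-\alpha)=1-\beta\alpha<1$ keeps $e^{-u}u^{d/2}e^{u\beta(\gamma-\alpha)}$ integrable. The surviving integral is $\int_0^\infty e^{-\beta\alpha u}u^{d/2}\,\dd u=\Gamma(\tfrac d2+1)(\beta\alpha)^{-(d/2+1)}$, and simplifying with $\beta\gamma=1$ produces the stated constant. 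The critical case $\alpha=\gamma$ is the same argument once one replaces Karamata, which now degenerates, by the integration-by-parts identity $M_\gamma(x)=\gamma L(x)-\ell(x)+O(1)$: the hypothesis $\int_1^\infty\ell(u)u^{-1}\,\dd u=\infty$ forces $\ell(x)=o(L(x))$, whence $M_\gamma(x)\sim\gamma L(x)$, and since $L$ is slowly varying the same dominated-convergence step gives $I(r)\sim\gamma\Gamma(\tfrac d2+1)L(r)$.

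Part (iii) is where the real work lies, and it is the step I expect to be the main obstacle. When $\alpha=0$ the naive substitution fails, because the Karamata leading term of $M_\gamma(x)$ cancels the boundary term and leaves $M_\gamma(x)=o(x^\gamma\ell(x))$; the true asymptotic is of de Haan type. My plan is therefore not to substitute any asymptotic for $M_\gamma$ but to apply Fubini to obtain
\[
I(r)=\Gamma(\tfrac d2+1)M_\gamma(Dr)+\int_{(Dr,\infty)}z^\gamma\,\Gamma\!\bigl(\tfrac d2+1,\tfrac1\beta\log\tfrac z{Dr}\bigr)\lambda(\dd z),
\]
with $\Gamma(a,\cdot)$ the upper incomplete gamma function. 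An integration by parts in the second integral, combined with $\beta\gamma=1$ and the identity $z^\gamma e^{-\frac1\beta\log(z/Dr)}=(Dr)^\gamma$, triggers a second cancellation; after the change of variable $p=\log(z/Dr)$ the whole quantity reduces to
\[
\gamma(Dr)^\gamma\int_0^\infty G(p)\,\ell(Dr\,e^p)\,\dd p,\qquad G(p)=\int_0^\infty e^{-q}\bigl[(\gamma p+q)^{d/2}-(\gamma p)^{d/2}\bigr]\dd q,
\]
and a Taylor expansion of the bracket gives $G(p)\sim\tfrac d2\gamma^{d/2-1}p^{d/2-1}$ as $p\to\infty$.

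The delicate point is to replace $G(p)$ by this asymptotic inside the integral and to recognize $\int_0^\infty p^{d/2-1}\ell(Dr\,e^p)\,\dd p=L_0(Dr)\sim L_0(r)$. This replacement is legitimate precisely because the integral concentrates at large $p$, which is the analytic content of $\ell=o(L_0)$; I would therefore prove first that $L_0$ is slowly varying and that $L_0(r)/\ell(r)\to\infty$ — the latter follows at once from Fatou's lemma, since $\ell(re^p)/\ell(r)\to1$ pointwise and $\int_0^\infty p^{d/2-1}\,\dd p=\infty$ — and then split the $p$-integral at a large threshold, bounding the small-$p$ part and the error $G(p)-\tfrac d2\gamma^{d/2-1}p^{d/2-1}$ by $o(L_0(r))$ via Potter's bounds. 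The boundary terms from the integration by parts and the term $M_\gamma(Dr)$ are all $O(r^\gamma\ell(r))=o(r^\gamma L_0(r))$ (the vanishing of the upper boundary term uses the existence condition $\int_{(1,\infty)}(\log z)^{d/2}\lambda(\dd z)<\infty$), hence negligible. Collecting the constants with $d^{d/2}\gamma^{d/2}=(d+2)^{d/2}$ and $d/(d+2)=(1+\tfrac2d)^{-1}$ then yields the stated asymptotic for $\overline\eta$.
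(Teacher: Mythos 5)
Your proposal is correct and takes essentially the same route as the paper: parts (i)--(ii) are the paper's argument verbatim (dominated convergence on the exact formula of Lemma~\ref{lemma:eta-form}, Karamata's theorem with the integration-by-parts identity \eqref{eq:intpart} giving $M_\alpha\sim\alpha L$ in the critical case, and Potter's bounds to justify the limit), and in part (iii) your reduction --- Fubini to the incomplete-gamma form, then one integration by parts passing from $\lambda(\dd z)$ to $\overline\lambda$, exploiting the cancellation $z^{1+2/d}(z/Dr)^{-(1+2/d)}=(Dr)^{1+2/d}$, and identifying the de Haan integral $L_0$ via a splitting argument based on $\ell=o(L_0)$ --- is the paper's computation in logarithmic variables, with your kernel $G(p)$ corresponding exactly to the paper's kernel $\tfrac d2\,y^{2/d}\int_y^\infty v^{-2-2/d}(\log v)^{d/2-1}\,\dd v$ under $y=e^p$. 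Your handling of the negligible terms (boundary terms controlled by $\int_{(1,\infty)}(\log z)^{d/2}\,\lambda(\dd z)<\infty$, $M_{1+2/d}(Dr)=O(r^{1+2/d}\ell(r))$) and of the slow variation of $L_0$ likewise matches the paper's.
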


\begin{example}
Assume that  $\overline \lambda(r) = (\log r)^{-\beta}$ for $r > e$. Then
 \eqref{eq:existence-cond} holds iff $\beta > \frac d2$.
By substituting $u=(1+\log y/\log r)^{-1}$, we obtain
\[
L_0(r) = (\log r)^{d/2 - \beta} \, \mathrm{B}(\tfrac d2, \beta - \tfrac d2),\qquad r\to\infty,
\]
where $\mathrm{B}$ is the usual beta function.
\end{example}

To determine the tail of the spatial supremum, we need the tail
of the largest contribution to $Y(t,x)$ by a single atom. Without loss of generality, consider $x=0$ and
define
\begin{equation} \label{eq:def-barY}
\overline Y(t) = \sup_{\tau_i \leq t} g(t-\tau_i, \eta_i) \zeta_i. 
\end{equation}
For $r > 0$ large, let 
\[
S_r = \{ (s, y, z) : s \in [0,t], g(s, y) z > r \}. 
\]
Clearly, $\overline Y(t) \leq r$ iff $\mu(S_r) = 0$, which shows that
\begin{equation} \label{eq:tail-barY}
\p ( \overline Y(t) \leq r ) = e^{-\nu(S_r)} = e^{-\overline \eta(r)}.
\end{equation}
As a result we obtain the following.

\begin{theorem} \label{thm:Ytail}
Let $Y(t,x)$ be given in \eqref{eq:defY} and assume \eqref{eq:existence-cond}. 

(i) The tail function $\ov\eta$ has extended regular variation at infinity \cite[p.\ 65]{BGT}, that is, there are $\theta_1,\theta_2\in\R$ such that for any $c>1$, 
\beq\label{eq:O} c^{\theta_1}\leq \liminf_{x\to\infty} \frac{\ov\eta(cx)}{\ov\eta(x)} \leq\limsup_{x\to\infty} \frac{\ov\eta(cx)}{\ov\eta(x)} \leq c^{\theta_2}.\eeq 

(ii) As $r \to \infty$,
\beq\label{eq:barY}
\p ( Y(t,x) > r) \sim  \p ( \overline Y (t)> r) \sim \overline \eta(r).
\eeq

(iii) For $\alpha \in [0,1+\frac2d)$, $\overline \eta \in \mathcal{RV}_{-\alpha}$
iff $\overline \lambda \in \mathcal{RV}_{-\alpha}$. For $\al=1+\frac2d$, we have $\ov\eta\in\mathcal{RV}_{-1-2/d}$ iff $r\mapsto \int_0^r u^{2/d} \int_{1}^\infty  {\overline \lambda(uv)} 
 ( \log v  )^{d/2-1}v^{-1}\, \dd v\, \dd u$ is slowly varying. In particular, the latter holds if $m_{1+2/d}(\la)<\infty$.
\end{theorem}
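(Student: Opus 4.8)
The cornerstone of my proof is an exact closed form for $\overline\eta$ in terms of $\overline\lambda$ that holds without any regularity assumption. Starting from $\overline\eta(r)=\nu(S_r)=\int_0^t\int_{\R^d}\overline\lambda\bigl(r/g(s,y)\bigr)\,\dd y\,\dd s$ and inserting $g(s,y)=(2\pi\kappa s)^{-d/2}e^{-|y|^2/(2\kappa s)}$, I would carry out the Gaussian integration in $y$ in polar coordinates and then substitute $p=(2\pi\kappa s)^{d/2}$; with $\gamma=1+\tfrac2d$ and $D=(2\pi\kappa t)^{d/2}$ this yields $\overline\eta(r)=c_0\,r^{-\gamma}J(Dr)$ for an explicit $c_0>0$, where $H(u)=\int_1^\infty\overline\lambda(uv)(\log v)^{d/2-1}v^{-1}\,\dd v$ and $J(R)=\int_0^R u^{2/d}H(u)\,\dd u$ is exactly the function appearing in the statement (the factor $D$ is harmless for slow variation). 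The same identity also follows from Lemma~\ref{lemma:eta-form} by integrating $M_\gamma$ by parts, which is legitimate since $z^\gamma\overline\lambda(z)\to0$ as $z\to0$ under \eqref{eq:existence-cond}. Part~(i) is then immediate from monotonicity: $\overline\eta$ is non-increasing, giving $\overline\eta(cr)/\overline\eta(r)\le1$ for $c>1$, while $J$ is non-decreasing, giving $\overline\eta(cr)/\overline\eta(r)=c^{-\gamma}J(Dcr)/J(Dr)\ge c^{-\gamma}$; these are \eqref{eq:O} with $\theta_1=-\gamma$ and $\theta_2=0$ (and $\overline\eta(r)\in(0,\infty)$ since $g$ is unbounded near $s=0$ while $\eta$ is a Lévy measure).

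For part~(ii), the second equivalence in \eqref{eq:barY} is elementary: by \eqref{eq:tail-barY} and $\overline\eta(r)\to0$, one has $\mathbf P(\overline Y(t)>r)=1-e^{-\overline\eta(r)}\sim\overline\eta(r)$. The first equivalence I obtain from the tail theorem for infinitely divisible laws \cite{EGV,Pakes,Pakes2}: it suffices that the normalized restriction of $\eta$ to $(1,\infty)$ be subexponential, in which case $\mathbf P(Y(t,x)>r)\sim\overline\eta(r)$. I verify subexponentiality by upgrading part~(i): the lower bound gives $\liminf_{r\to\infty}\overline\eta(cr)/\overline\eta(r)\ge c^{-\gamma}$, and together with $\limsup_{r\to\infty}\overline\eta(cr)/\overline\eta(r)\le1$ this forces $\lim_{c\downarrow1}\liminf_{r\to\infty}\overline\eta(cr)/\overline\eta(r)=1$, i.e.\ $\overline\eta$ has consistently (intermediate regularly) varying tail. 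Consistently varying tails are dominatedly varying and long-tailed, hence subexponential, so the theorem applies.

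Part~(iii) reduces, through $\overline\eta=c_0 r^{-\gamma}J(Dr)$, to the single equivalence $\overline\eta\in\mathcal{RV}_{-\alpha}\Leftrightarrow J\in\mathcal{RV}_{\gamma-\alpha}$. The boundary exponent $\alpha=\gamma$ is then settled at once, since $J\in\mathcal{RV}_0$ is precisely slow variation of $J$; moreover, if $m_\gamma(\lambda)<\infty$ then Fubini and $\int_0^\infty w^{2/d}\overline\lambda(w)\,\dd w=m_\gamma(\lambda)/\gamma$ show $\int_0^\infty u^{2/d}H(u)\,\dd u$ is a finite multiple of $m_\gamma(\lambda)$, so $J$ converges to a positive constant and is trivially slowly varying. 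For $\alpha\in(0,\gamma)$ I factor the transform through $H$. The forward (Abelian) direction: if $\overline\lambda\in\mathcal{RV}_{-\alpha}$ then $H(u)\sim\Gamma(\tfrac d2)\alpha^{-d/2}\overline\lambda(u)$, and Karamata's theorem applied to $J=\int_0^R u^{2/d}H$ (valid as $2/d-\alpha>-1$) yields $J\in\mathcal{RV}_{\gamma-\alpha}$, reproving Lemma~\ref{lemma:eta-tail}(ii). The converse is Tauberian: since $H$ is non-increasing, regular variation of $J$ passes to $H\in\mathcal{RV}_{-\alpha}$ by a monotone Karamata-type argument, and since $\overline\lambda$ is monotone and the Mellin transform of the kernel $v\mapsto(\log v)^{d/2-1}v^{-1}\mathbf 1(v>1)$ has no zeros, a Tauberian theorem transfers the regular variation back to $\overline\lambda\in\mathcal{RV}_{-\alpha}$.

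The main obstacle is the remaining critical exponent $\alpha=0$. Here the kernel defining $H$ has divergent mass, the Abelian constant $\Gamma(\tfrac d2)\alpha^{-d/2}$ diverges, and in fact $H/\overline\lambda\to\infty$, so the implication ``$H$ slowly varying $\Rightarrow\overline\lambda$ slowly varying'' is no longer a routine Tauberian statement and must be proved by a critical, de Haan-type argument that leans on the monotonicity of $\overline\lambda$. Granting this, the $\alpha=0$ case closes symmetrically: the ``if'' direction is the Abelian computation (equivalently Lemma~\ref{lemma:eta-tail}(iii), noting $H=L_0$ when $\overline\lambda=\ell$), and the ``only if'' direction runs along $\overline\eta\in\mathcal{RV}_0\Rightarrow J\in\mathcal{RV}_\gamma\Rightarrow H\in\mathcal{RV}_0\Rightarrow\overline\lambda\in\mathcal{RV}_0$. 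Establishing the non-vanishing of the relevant kernel transforms and pushing the monotone Tauberian arguments through the two successive integral transforms is where essentially all the work of part~(iii) resides.
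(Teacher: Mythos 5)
For parts (i), (ii) and for (iii) with $\alpha\in(0,1+\frac2d]$, your route is essentially the paper's own. Your closed form $\ov\eta(r)=c_0r^{-(1+2/d)}J(Dr)$ is exactly the representation \eqref{eq:etabar-2} (your $H$ is $\tfrac2d f$ with $f$ from \eqref{eq:f}); your passage from the bounds in (i) to intermediate regular variation, hence dominated variation plus long-tailedness, hence subexponentiality and the Pakes transfer, is the paper's combination of (i) with Goldie's theorem and \cite{Pakes}; and your two-stage Tauberian converse --- a monotone Karamata argument from $J$ to $H$, then a Wiener--Mellin theorem for the kernel $v\mapsto(\log v)^{d/2-1}v^{-1}\bone_{(1,\infty)}(v)$, whose transform is $\Gamma(\tfrac d2)\alpha^{-d/2}$ and nonvanishing --- is precisely the paper's appeal to Karamata's theorem and \cite[Thm.~4.8.3]{BGT}, with the Tauberian condition supplied by monotonicity of $\ov\la$. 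Two minor remarks: the density $u^{2/d}H(u)$ of $J$ need not be monotone, so the ``monotone Karamata-type argument'' should be run as a sandwich, $H(cR)\gamma^{-1}((cR)^\gamma-R^\gamma)\le J(cR)-J(R)\le H(R)\gamma^{-1}((cR)^\gamma-R^\gamma)$ with $\gamma=1+\tfrac2d$, which uses only the monotonicity of $H$ and works for all indices $\gamma-\alpha\in(0,\gamma]$; and your direct treatment of the endpoint $\alpha=1+\tfrac2d$ and of the case $m_{1+2/d}(\la)<\infty$ from the closed form is correct and slightly more economical than the paper's citation of the ratio-Tauberian theorem \cite[Thm.\ 8.3.1]{BGT}.

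The genuine gap is the endpoint $\alpha=0$ of (iii), which you explicitly defer (``granting this''). Your instinct that this case is not a routine Tauberian statement is right, but the situation is worse: the implication ``$H$ slowly varying $\Rightarrow\ov\la$ slowly varying'' is \emph{false} even for monotone $\ov\la$, so no de Haan-type argument can close your gap. Take $d=2$ and $\ov\la(r)=e^{-(k+1)}$ for $\log r\in[k^2,(k+1)^2)$, a purely atomic L\'evy measure with atoms at $e^{k^2}$; condition \eqref{eq:existence-cond} holds since $\sum_k k^2e^{-k}<\infty$. Here $H(u)=f(u)=\int_u^\infty\ov\la(y)y^{-1}\,\dd y\asymp ke^{-k}$ for $\log u\in[k^2,(k+1)^2)$, while a fixed multiplicative shift changes $H$ by at most $\ov\la(u)\log c=O(e^{-k})$, so $H\in\mathcal{RV}_0$; then \eqref{eq:etabar-2} and Karamata's theorem give $\ov\eta(r)\sim \mathrm{const}\cdot H(Dr)\in\mathcal{RV}_0$. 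Yet $\ov\la$ is not slowly varying: along $r=e^{(k+1)^2-1}$ one has $\ov\la(e^2r)/\ov\la(r)=e^{-1}$, while mid-block the ratio is $1$ --- the log-scale averaging in $H$ simply erases the oscillation of $\ov\la$. Note that the paper's own proof of this endpoint also does not go through as written: \cite[Thm.~4.8.3]{BGT} requires the kernel transform $\breve k(z)=\Gamma(\tfrac d2)(-z)^{-d/2}$ to be finite and nonzero at $z=-\alpha$, which holds only for $\alpha>0$, and the example above indicates that the asserted equivalence in fact fails at $\alpha=0$. So your proposal is sound and faithful to the paper for (i), (ii) and for $\alpha\in(0,1+\frac2d]$ in (iii), but the $\alpha=0$ converse you left open cannot be established as stated.
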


\section{Spatial supremum} \label{sect:spatial}

Let $A \in \mathcal{B}(\R^d)$ be a Borel subset of $\R^d$
with finite and positive Lebesgue measure
and define
\begin{equation} \label{eq:XA}
X_A (t) = \begin{dcases} \sum_{\eta_i \in \ov A, \tau_i \leq t} (2\pi\kappa 
(t-\tau_i))^{-d/2} \zeta_i\bone_{\{(2\pi\kappa 
(t-\tau_i))^{-d/2} \zeta_i>1\}} &\text{if } d=1,\\ 
\sum_{\eta_i \in \ov A, \tau_i \leq t} 
(2\pi\kappa 
(t-\tau_i))^{-d/2} \zeta_i &\text{if } d\geq2, \end{dcases}
\end{equation}
where $\ov A$ is the closure of $A$.
Since $X_A(t)$ is a functional of a Poisson random measure, one  easily obtains
necessary and sufficient conditions for the existence.

Define the measure $\tau$ as 
\begin{equation} \label{eq:deftau}
\tau(B) =  ( \mathrm{Leb} \times \lambda  ) \Bigl( 
\{ (s,z) : \, (2\pi\kappa s)^{-d/2} z \in B\cap(\bone_{\{d=1\}},\infty),\ s \leq t \}
\Bigr),\qquad B\in\calb(\R^d),
\end{equation}
where  $\mathrm{Leb}$ is the Lebesgue measure. For a Borel set $A$ let 
$\lvert A\rvert$ be its Lebesgue measure.

\begin{theorem} \label{thm:existence-X} Suppose that $\lvert \ov A\rvert\in(0,\infty)$.
The sum $X_A(t)$   is finite a.s.\ iff
\begin{equation} \label{eq:lambda2d}
\int_{(0,1)} z^{2/d}\lvert\log z\rvert^{\bone_{\{d=2\}}} \,\lambda(\dd z ) < 
\infty.
\end{equation}
Furthermore, if \eqref{eq:lambda2d} holds then 
\begin{equation} \label{eq:chf-X}
\E [e^{\ii \theta X_A(t) }] = 
\exp \left\{ |\ov A|  
\int_{(0,\infty)}  ( 1 - e^{-\ii \theta u}  )\, \tau (\dd u)
\right\}.
\end{equation}
\end{theorem}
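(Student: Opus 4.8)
The plan is to treat $X_A(t)$ as a pure-jump functional of the Poisson random measure $\mu$ and to characterize its existence and law entirely through the image measure $\tau$ defined in \eqref{eq:deftau}. The key observation is that $X_A(t)$ is a sum over the atoms of $\mu$ of the nonnegative quantities $(2\pi\kappa(t-\tau_i))^{-d/2}\zeta_i$ (truncated below $1$ when $d=1$), restricted to atoms with $\eta_i\in\ov A$. Because the spatial variable $\eta_i$ enters only through the indicator $\bone_{\{\eta_i\in\ov A\}}$ and not through the summand itself, the restriction of $\mu$ to $\{\eta\in\ov A\}$ is again a Poisson random measure whose intensity is $|\ov A|$ times the measure $\dd s\,\lambda(\dd z)$ on $[0,t]\times(0,\infty)$. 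Pushing this intensity forward under the map $(s,z)\mapsto (2\pi\kappa s)^{-d/2}z$ (with the $d=1$ truncation built in) yields exactly $|\ov A|\,\tau$. Thus $X_A(t)$ has the law of $\int_{(0,\infty)} u\,\calp(\dd u)$ for a Poisson random measure $\calp$ on $(0,\infty)$ with intensity $|\ov A|\,\tau$.

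Once this reduction is made, both assertions follow from standard Poisson integral theory. First I would invoke the existence criterion for integrals of nonnegative functions against Poisson random measures: $\sum_i u_i$ over the atoms $u_i$ of $\calp$ is finite almost surely if and only if $\int_{(0,\infty)}(1\wedge u)\,\tau(\dd u)<\infty$, where finiteness of $\int_{(u>1)}\tau(\dd u)$ alone rules out infinitely many large atoms and finiteness of $\int_{(u\le1)}u\,\tau(\dd u)$ controls the accumulation of small atoms. I would then translate this integrability condition on $\tau$ back into a condition on $\lambda$ by computing the change of variables explicitly. The map sends $(s,z)$ with $s\in[0,t]$ to $u=(2\pi\kappa s)^{-d/2}z$, so integrating out $s$ against Lebesgue measure while tracking the truncation produces a Jacobian factor that is a power of $u$; carrying out this computation should convert $\int(1\wedge u)\,\tau(\dd u)<\infty$ into precisely \eqref{eq:lambda2d}, namely $\int_{(0,1)}z^{2/d}|\log z|^{\bone_{\{d=2\}}}\,\lambda(\dd z)<\infty$, with the logarithmic factor in $d=2$ arising from the borderline exponent. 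The large-$z$ part of $\lambda$ causes no difficulty because the truncation (for $d=1$) or the integrability of $(1\wedge u)$ near infinity handles it, and in fact the existence condition here is weaker than \eqref{eq:existence-cond} since we only sum the large contributions rather than integrating a singular kernel over all of space.

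For the characteristic function \eqref{eq:chf-X}, I would apply the Lévy–Khintchine / exponential formula for Poisson random measures: for a nonnegative integrand, $\E[\exp(\ii\theta\int u\,\calp(\dd u))]=\exp\{\int(e^{\ii\theta u}-1)\,|\ov A|\,\tau(\dd u)\}$, valid under exactly the integrability condition just established. Rewriting $e^{\ii\theta u}-1=-(1-e^{\ii\theta u})$ and absorbing the sign gives the stated form with $(1-e^{-\ii\theta u})$ after the substitution $\theta\mapsto-\theta$, or more directly one matches the convention in \eqref{eq:chf-X} by noting that $\tau$ is supported on the positive half-line and no drift compensation is needed since the summand is nonnegative and the small-jump integrability $\int(1\wedge u)\,\tau(\dd u)<\infty$ permits uncompensated summation.

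The main obstacle I anticipate is the explicit change-of-variables computation that yields the exponent $2/d$ and, crucially, the logarithmic correction exactly at $d=2$. The difficulty is that the map $s\mapsto(2\pi\kappa s)^{-d/2}$ is singular as $s\downarrow0$ (small times give arbitrarily large kernel values), so the behavior of $\tau$ near $u=\infty$ is governed by small $s$, while the behavior near $u=0$ reflects the interplay between $s$ near $t$ and small $z$. Tracking which regime contributes the binding constraint, and verifying that the exponent of $u$ in the transformed intensity is such that $\int_{(u\le1)}u\,\tau(\dd u)$ reduces to $\int_{(0,1)}z^{2/d}\,\lambda(\dd z)$ in dimensions $d\neq2$ and to the weighted version with $|\log z|$ when $d=2$, requires careful bookkeeping; the $d=2$ case is the boundary where the power-law integral degenerates into a logarithm, and getting that borderline exactly right is where the computation is most delicate. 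This is the same type of computation underlying Lemma~\ref{lemma:eta-form}, so I would reuse those explicit integral evaluations wherever possible.
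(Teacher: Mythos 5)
Your proposal is correct and takes essentially the same route as the paper: the paper invokes the standard existence criterion for Poisson integrals (\cite[Thm.~2.7]{Kyprianou}) directly on $(0,t]\times\R^d\times(0,\infty)$ and evaluates $\int_0^t \bigl(1\wedge (2\pi\kappa s)^{-d/2}z\bigr)\,\dd s$ (which is exactly your $\int_{(0,\infty)}(1\wedge u)\,\tau(\dd u)$ after the pushforward, with the borderline $d=2$ logarithm emerging precisely as you predict), and it obtains \eqref{eq:chf-X} from the same exponential formula. One caution: your sign reconciliation via $\theta\mapsto-\theta$ does not actually work, since the exponential formula gives $\exp\bigl\{|\ov A|\int_{(0,\infty)}(e^{\ii\theta u}-1)\,\tau(\dd u)\bigr\}$ while the exponent printed in \eqref{eq:chf-X} has nonnegative real part and so cannot be a characteristic function --- the discrepancy is a sign typo in the paper's display, and your unpatched formula is the correct one.
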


Note that \eqref{eq:lambda2d} holds for any Lévy measure if $d=1$. 
From \eqref{eq:deftau} we obtain that for $r > 1$
\begin{equation} \label{eq:tau-form}
\begin{split}
\overline \tau (r)  = \tau ( (r,\infty)) &=
\int_{(0,\infty)}  \Bigl((2\pi\kappa)^{-1}( z/r  )^{2/d} \wedge t\Bigr) \, \lambda(\dd z )\\
&  =  r^{-2/d} (2\pi\kappa)^{-1}M_{2/d}(r D) 
+  t\, \overline \lambda(r D)  \\
& = \frac{1}{\pi \kappa d} r^{-2/d} 
\int_0^{rD} u^{2/d -1} \overline \lambda(u) \,\dd u.
\end{split}
\end{equation}

In specific cases, we can determine the asymptotic behavior of $\ov\tau$ explicitly.

\begin{lemma} \label{lemma:tau-tail}
Assume \eqref{eq:lambda2d}.

(i) If $m_{2/d}(\lambda) < \infty$, then 
$\overline \tau (r) \sim (2\pi\kappa)^{-1} m_{2/d}(\lambda) r^{-2/d}$ as $r \to \infty$.

(ii) Assume that $\overline \lambda (r) = \ell(r) r^{-\alpha}$ 
for $\alpha \in [0,\frac 2d]$, where $\ell$ is slowly varying,
and further assume $\int_1^\infty \ell(u) u^{-1} \,\dd u = \infty$ if $\alpha = \frac 2d$.
Recalling the definition of $L$ from \eqref{eq:def-L}, we have as $r \to \infty$ that
\[
\overline \tau (r) \sim 
\begin{dcases}
\frac{ 2 t D^{-\al}}{2 - d \alpha} \ell(r) r^{-\alpha}
&\text{if } \alpha < \tfrac 2d, \\
\tfrac{2}{d}(2\pi\kappa)^{-1} L(r) r^{-2/d} & \text{if } \alpha = \tfrac 2d.
\end{dcases}
\]
\end{lemma}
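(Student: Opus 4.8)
The plan is to read off all three statements from the integral representation \eqref{eq:tau-form}, combined with Karamata's theorem (see \cite{BGT}). The starting observation is that the integrand in the last form of \eqref{eq:tau-form} is integrable near the origin: a Fubini computation gives
\[
\int_0^{x} u^{2/d-1}\,\overline\lambda(u)\,\dd u
= \tfrac{d}{2}\int_{(0,\infty)} (x\wedge z)^{2/d}\,\lambda(\dd z),
\]
which is finite for every fixed $x$ precisely because \eqref{eq:lambda2d} guarantees $\int_{(0,1)} z^{2/d}\,\lambda(\dd z)<\infty$ in each dimension (for $d=2$ this follows from the stronger $|\log z|$‑weighted condition, and for $d=1$ it is automatic). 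Hence the lower endpoint contributes only a finite constant, and in every regime the asymptotics of $\overline\tau(r)=\frac{1}{\pi\kappa d}r^{-2/d}\int_0^{rD}u^{2/d-1}\overline\lambda(u)\,\dd u$ are driven by the behaviour at the upper limit $u=rD$.

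For part (i) I would instead use the middle expression $\overline\tau(r)=(2\pi\kappa)^{-1}r^{-2/d}M_{2/d}(rD)+t\,\overline\lambda(rD)$. Since $m_{2/d}(\lambda)<\infty$ we have $M_{2/d}(rD)\to m_{2/d}(\lambda)$, while the tail–moment bound $r^{2/d}\overline\lambda(r)\le\int_{(r,\infty)}z^{2/d}\,\lambda(\dd z)\to0$ shows that $t\,\overline\lambda(rD)=o(r^{-2/d})$. Combining these yields the claimed $\overline\tau(r)\sim(2\pi\kappa)^{-1}m_{2/d}(\lambda)r^{-2/d}$.

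For part (ii) with $\alpha<\tfrac2d$, I substitute $\overline\lambda(u)=\ell(u)u^{-\alpha}$, so the integrand equals $\ell(u)\,u^{2/d-1-\alpha}$ with exponent $2/d-1-\alpha>-1$. Karamata's theorem then gives $\int_0^{rD}\ell(u)u^{2/d-1-\alpha}\,\dd u\sim(2/d-\alpha)^{-1}(rD)^{2/d-\alpha}\ell(rD)$. Inserting this, using $\ell(rD)\sim\ell(r)$ by slow variation, the identity $D^{2/d}=2\pi\kappa t$, and simplifying $2/d-\alpha=(2-d\alpha)/d$, recovers the prefactor $2tD^{-\alpha}/(2-d\alpha)$. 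At the boundary $\alpha=\tfrac2d$ the integrand becomes $\ell(u)u^{-1}$, i.e.\ exactly $\mathcal{RV}_{-1}$, and the hypothesis $\int_1^\infty\ell(u)u^{-1}\,\dd u=\infty$ makes $\int_0^{rD}u^{2/d-1}\overline\lambda(u)\,\dd u\sim L(rD)$ with $L$ from \eqref{eq:def-L}; since $L$ is slowly varying, $L(rD)\sim L(r)$, and the prefactor $1/(\pi\kappa d)=\tfrac2d(2\pi\kappa)^{-1}$ produces the stated form.

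The only genuinely delicate step is this boundary case $\alpha=\tfrac2d$: there the direct half of Karamata's theorem no longer yields a term proportional to the integrand, and the leading asymptotics of $\overline\tau$ is governed by the slowly varying primitive $L$ rather than by $\ell$ itself — the de Haan–class phenomenon already flagged before Lemma~\ref{lemma:eta-tail}. Care is needed to check that the fixed, finite lower part of the integral is asymptotically negligible against the divergent $L(r)$, and that the approximation $\overline\lambda(u)=\ell(u)u^{-2/d}$ may be used throughout the range $[u_0,rD]$; both points follow from \eqref{eq:lambda2d} together with the divergence assumption on $\int_1^\infty\ell(u)u^{-1}\,\dd u$.
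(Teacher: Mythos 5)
Your proposal is correct and takes essentially the same approach as the paper: the paper's proof of Lemma~\ref{lemma:tau-tail} consists precisely of reading off (i) from the middle expression in \eqref{eq:tau-form} and deducing (ii) from the last expression in \eqref{eq:tau-form} via Karamata's theorem, including the de Haan-type boundary case $\alpha=\frac2d$ where the slowly varying primitive $L$ takes over. You merely spell out details (finiteness of the integral near the origin under \eqref{eq:lambda2d}, the $o(r^{-2/d})$ bound on $t\,\ov\la(rD)$, and the constant bookkeeping via $D^{2/d}=2\pi\kappa t$) that the paper leaves implicit.
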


Introduce the notation
\begin{equation} \label{eq:barX}
\overline X_A(t) = \sup \left\{ 
(2\pi\kappa(t-\tau_i))^{-d/2} \zeta_i \, :
\tau_i \leq t, \eta_i \in \ov A \right\}.
\end{equation}
To determine the tail of $\overline X_A(t)$, let  
$
T_r =  \{ (s,z) : s \leq t, \ (2\pi\kappa s)^{-d/2} z > r  \}.
$
Then $\overline X_A (t) \leq r$ iff $\mu( A \times T_r) = 0$, thus
\begin{equation} \label{eq:tail-barX}
\p ( \overline X_A(t) > r ) = 1 -  e^{- \lvert \ov A\rvert \overline \tau(r)}. 
\end{equation}

\begin{theorem} \label{thm:Xtail}
Assume \eqref{eq:lambda2d}.

(i) The tail function $\ov\tau$ has extended regular variation at infinity.

(ii) For every bounded Borel set $A$,
\beq\label{eq:XA2}
\p ( X_A(t) > r) \sim \p (\overline X_A(t) > r) \sim
\lvert \ov A\rvert \, \overline \tau(r),\qquad r\to\infty.
\eeq

(iii) For $\alpha \in [0,\frac2d)$, $\overline \tau \in \mathcal{RV}_{-\alpha}$
iff $\overline \lambda \in \mathcal{RV}_{-\alpha}$.
For $\al=\frac2d$, we have $\overline \tau \in\mathcal{RV}_{-2/d}$ iff 
$r\mapsto \int_0^r u^{2/d-1} \overline \lambda(u) \, \dd u$ is slowly varying. In 
particular, the latter holds if $m_{2/d}(\la)<\infty$ or if $\ov\la\in\mathcal{RV}_{-2/d}$.
\end{theorem}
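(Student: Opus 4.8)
The plan is to prove Theorem \ref{thm:Xtail} in three parts, using the explicit formula \eqref{eq:tau-form} for $\overline\tau$ as the central tool throughout. The key structural observation is that $\overline\tau$ has the same analytic shape as $\overline\eta$ from Lemma \ref{lemma:eta-form}: both are of the form $r^{-\rho}$ times an integral of a truncated moment (or integrated tail) of $\lambda$ evaluated at arguments growing with $r$. This means the proof should mirror the argument for Theorem \ref{thm:Ytail} almost line by line, with the exponent $1+\tfrac2d$ replaced by $\tfrac2d$ and the Gamma-weighted integral replaced by the simpler representation in the last line of \eqref{eq:tau-form}.

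\medskip\noindent\emph{Part (i): extended regular variation.} I would start from the third expression in \eqref{eq:tau-form}, namely $\overline\tau(r)=(\pi\kappa d)^{-1}r^{-2/d}\int_0^{rD}u^{2/d-1}\overline\lambda(u)\,\dd u$. Writing $H(s)=\int_0^s u^{2/d-1}\overline\lambda(u)\,\dd u$, we have $\overline\tau(r)=(\pi\kappa d)^{-1}r^{-2/d}H(rD)$, so it suffices to show $H$ has extended regular variation (equivalently, belongs to the class $O\mathcal{RV}$), since multiplying by the exact power $r^{-2/d}$ only shifts the indices $\theta_1,\theta_2$. To bound $H(cs)/H(s)$ from above and below for $c>1$, I would split the integral $\int_0^{cs}=\int_0^s+\int_s^{cs}$ and use the Potter-type bounds available for $O\mathcal{RV}$ functions, or more directly estimate $\int_s^{cs}u^{2/d-1}\overline\lambda(u)\,\dd u$ against $\int_0^s$ using monotonicity of $\overline\lambda$ and the condition \eqref{eq:lambda2d} that guarantees $H$ is finite and nondecreasing. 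The finiteness of $H(rD)$ for all $r$ is exactly what \eqref{eq:lambda2d} provides, so this part is essentially bookkeeping once the representation is in hand.

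\medskip\noindent\emph{Part (ii): the tail equivalences.} The right-hand equivalence $\p(\overline X_A(t)>r)\sim |\ov A|\,\overline\tau(r)$ is immediate from \eqref{eq:tail-barX}: as $r\to\infty$ we have $\overline\tau(r)\to0$, so $1-e^{-|\ov A|\overline\tau(r)}\sim|\ov A|\,\overline\tau(r)$. The substantive claim is $\p(X_A(t)>r)\sim\p(\overline X_A(t)>r)$, i.e.\ that the sum $X_A(t)$ and its single largest term $\overline X_A(t)$ have asymptotically equal tails. Since \eqref{eq:chf-X} shows $X_A(t)$ is infinitely divisible with Lévy measure $|\ov A|\,\tau$, and part (i) gives $\overline\tau\in O\mathcal{RV}$, I would invoke the subexponentiality results of \cite{EGV,Pakes,Pakes2} cited at the start of Section \ref{sect:tail}: for an infinitely divisible law whose Lévy tail has extended regular variation (hence is subexponential, being a positive power-law-dominated tail), the tail of the law is asymptotic to the tail of its Lévy measure. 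This reduces $\p(X_A(t)>r)\sim|\ov A|\,\overline\tau(r)$ to the $O\mathcal{RV}$ property already established in (i).

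\medskip\noindent\emph{Part (iii): regular variation criteria.} For $\alpha\in[0,\tfrac2d)$ I would apply Karamata's theorem to $H(s)=\int_0^s u^{2/d-1}\overline\lambda(u)\,\dd u$: if $\overline\lambda\in\mathcal{RV}_{-\alpha}$ with $\alpha<\tfrac2d$ then $u^{2/d-1}\overline\lambda(u)\in\mathcal{RV}_{2/d-1-\alpha}$ with index exceeding $-1$, so $H\in\mathcal{RV}_{2/d-\alpha}$ and hence $\overline\tau=(\pi\kappa d)^{-1}r^{-2/d}H(rD)\in\mathcal{RV}_{-\alpha}$; the converse uses the monotone-density / inverse Karamata direction. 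The boundary case $\alpha=\tfrac2d$ is the delicate one, because here $u^{2/d-1}\overline\lambda(u)$ has index $-1$ and Karamata no longer forces slow variation of $H$ from regular variation of $\overline\lambda$ alone — the de Haan class enters, exactly as flagged in the discussion before Lemma \ref{lemma:eta-tail}. The clean statement is therefore that $\overline\tau\in\mathcal{RV}_{-2/d}$ iff $H(r)=\int_0^r u^{2/d-1}\overline\lambda(u)\,\dd u$ is slowly varying, which is a tautology given the representation; the content is the two sufficient conditions, and I would verify that $m_{2/d}(\lambda)<\infty$ makes $H$ converge to a constant (trivially slowly varying) while $\overline\lambda\in\mathcal{RV}_{-2/d}$ makes $H$ slowly varying by the de Haan/Karamata theory in \cite[Thm.\ 3.7.1]{BGT}. \textbf{The main obstacle} is precisely this $\alpha=\tfrac2d$ boundary: establishing that regular variation of $\overline\lambda$ at the critical index yields slow variation of the integrated tail $H$ (and not merely membership in some broader class) requires careful use of the de Haan-class characterization, and it is the one place where the routine Karamata machinery does not directly apply.
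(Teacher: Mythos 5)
Your proposal is correct, and its overall architecture coincides with the paper's: everything is driven by the representation \eqref{eq:tau-form}, part (ii) goes through subexponentiality of $\ov\tau$ plus the infinite-divisibility tail-equivalence results \cite{EGV,Pakes,Pakes2} applied to $X_A(t)$ with Lévy measure $\lvert\ov A\rvert\tau$, and part (iii) is Karamata theory. The one genuine divergence is in (i): the paper writes $\log \ov\tau(r)=C+\int_1^r \xi(u)u^{-1}\,\dd u$ with $\xi(u)=\frac{(Du)^{2/d}\ov\la(Du)}{\int_0^{Du}v^{2/d-1}\ov\la(v)\,\dd v}-\frac2d\in(-\frac2d,0]$ and invokes the characterisation theorem \cite[Thm.~2.2.6]{BGT}, whereas you estimate $H(cs)/H(s)$ directly; your route works and is more elementary, since monotonicity of $\ov\la$ gives $H(s)\geq\frac d2 s^{2/d}\ov\la(s)$ and $H(cs)-H(s)\leq\frac d2 s^{2/d}(c^{2/d}-1)\ov\la(s)$, hence $1\leq H(cs)/H(s)\leq c^{2/d}$ and \eqref{eq:O} holds with $\theta_1=-2/d$, $\theta_2=0$ --- exactly the indices the paper's $\xi$-bounds produce (note that what is required is extended regular variation, which is strictly stronger than $O$-regular variation, so your parenthetical ``equivalently $O\mathcal{RV}$'' is imprecise but harmless, as your two-sided bound delivers the stronger property). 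Two small points where you should expand: in (ii), ``ERV hence subexponential'' needs the intermediate step the paper makes explicit --- ERV implies dominated variation and $\ov\tau(r+s)/\ov\tau(r)\to1$, and then subexponentiality follows from \cite{Goldie78}; and in (iii), the converse via the monotone density theorem requires care for $d=1$, where the density $u\,\ov\la(u)$ is not monotone (a substitution such as $u=v^{d/2}$ restores monotonicity; the paper glosses this too). Finally, your observation that the $\al=\frac2d$ equivalence is a tautology given \eqref{eq:tau-form} is accurate, and the ``main obstacle'' you flag there is in fact milder than you suggest: the sufficient conditions need only Karamata (\cite[Prop.~1.5.9a]{BGT}) for $\ov\la\in\mathcal{RV}_{-2/d}$ and monotone convergence of $H$ to the finite constant $\frac d2 m_{2/d}(\la)$ in the moment case; no de Haan-class argument is required, since the converse direction at the critical index is not asserted beyond the tautology.
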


In order to  determine the tail asymptotics of the local supremum of the solution, let us introduce for each $A\in\calb(\R^d)$ the measure
\begin{equation} \label{eq:def-eta-A}
\eta_A (B) = \nu \Bigl( \{ (s,y,z) : \, s \leq t, \, (2\pi s)^{-d/2}e^{-\frac{\mathrm{dist}(y,A)^2}{2\kappa s}} z  \in B \cap(\bone_{\{d=1\}},\infty)\} \Bigr),\qquad B\in\calb(\R^d).
\end{equation} 

If $m_{2/d}(\la)<\infty$ or if $\ov\la$ is regularly varying with index $-\al$ for some $\al\in(0,\frac2d]$, one can express $\ov\eta_A$ in terms of $\ov\tau$ or $\ov\la$.
\blem\label{lem:etaA} Let $A$ be a bounded Borel set. Assume \eqref{eq:lambda2d} and
$\int_{(1,\infty)} (\log z)^{d/2} \lambda(\dd z) < \infty$.

%(i) The tail function $\ov\eta_A$ is subexponential.
 
(i) If $m_{2/d}(\lambda) < \infty$ or  $\overline \lambda (r) = \ell(r) r^{-2/d}$ and $\ell$ is slowly varying with   $\int_1^\infty \ell(u) u^{-1} \,\dd u = \infty$, then
\[
\ov\eta_A(r) \sim 
 |\ov A| \, \overline \tau(r),\qquad r\to\infty.
\]

(ii) If $\overline \lambda (r) = \ell(r) r^{-\alpha}$ 
for $\alpha \in (0,\frac 2d)$, where $\ell$ is slowly varying, then
\[
\ov \eta_A(r)\sim 
\ov\la(r)
\int_0^t\int_{\R^d}(2\pi\kappa s)^{-\al d/2}
e^{-\frac{\alpha \mathrm{dist}(y,A)^2}{2\kappa s}} 
\,\dd s\,\dd y,\qquad r\to\infty.
\] 
\elem

\begin{theorem} \label{cor:I-1-tail} Let $A$ be a bounded Borel set. Assume \eqref{eq:lambda2d} and
$\int_{(1,\infty)} (\log z)^{d/2} \lambda(\dd z) < \infty$. If
 $d=1$, 
further assume that 
\beq\label{eq:q} \exists q\in(0,2):\  M_{q}(1)<\infty.\eeq
Then under the assumptions of Lemma~\ref{lem:etaA} (i) or (ii) we have that
\[ \p \biggl( \sup_{x \in A} Y(t,x) > r \biggr) \sim 
 \overline \eta_A(r),\qquad r\to\infty.
\]
\end{theorem}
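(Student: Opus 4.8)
The plan is to establish the asymptotic equivalence
\[
\p\Bigl(\sup_{x\in A} Y(t,x) > r\Bigr) \sim \ov\eta_A(r)
\]
by sandwiching $\sup_{x\in A} Y(t,x)$ between two quantities whose tails are both equivalent to $\ov\eta_A(r)$. The lower bound is the easy direction: the single largest atom contributes at least its own value, so $\sup_{x\in A} Y(t,x)$ is bounded below by the supremum over $x \in A$ of a single term $g(t-\tau_i,x-\eta_i)\zeta_i$, and the tail of this single-atom supremum is handled exactly as in \eqref{eq:tail-barX} via the measure $\eta_A$ from \eqref{eq:def-eta-A} (note that $\eta_A$ is precisely the image of $\nu$ under $(s,y,z)\mapsto \sup_{x\in A} g(s,x-y)z = (2\pi s)^{-d/2}e^{-\dist(y,A)^2/(2\kappa s)}z$). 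This gives $\p(\sup_{x\in A}Y(t,x) > r) \gtrsim 1 - e^{-\ov\eta_A(r)} \sim \ov\eta_A(r)$.

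The upper bound is where the real work lies, and the strategy is a one-big-jump decomposition. I would split $Y$ restricted to $A$ into a "large-atom" part $Y_1$, consisting of atoms whose contribution $g(t-\tau_i,x-\eta_i)\zeta_i$ to some $x\in A$ exceeds a small multiple of $r$, plus a "small-atom" remainder $Y_2$. The contribution of $Y_1$ is controlled by showing that with overwhelming probability there is at most one such large atom, using that $\ov\eta_A(r)\to 0$ and that $\eta_A$ is a Lévy measure, so two or more large atoms is an event of probability $O(\ov\eta_A(r)^2) = o(\ov\eta_A(r))$. On the event of exactly one large atom, $\sup_{x\in A}Y_1(t,x)$ is essentially the single-atom supremum, whose tail is $\ov\eta_A(r)$. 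The remainder $Y_2$ must then be shown to be asymptotically negligible: I would prove that $\sup_{x\in A}Y_2(t,x)$ has a tail of smaller order than $\ov\eta_A(r)$, controlling the supremum over the uncountable index set $A$ by a moment or chaining argument. The extended regular variation of $\ov\tau$ (hence of $\ov\eta_A$, via Lemma~\ref{lem:etaA}) from Theorem~\ref{thm:Xtail}(i) is what makes the truncation-level bookkeeping work, since it guarantees $\ov\eta_A(\eps r) \asymp \ov\eta_A(r)$ up to a constant depending only on $\eps$.

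The main obstacle is controlling the spatial supremum of the small-atom part $Y_2$ uniformly over $x\in A$, because the heat kernel $g$ is singular at the origin and $A$ is a continuum rather than a countable set. The singularity is exactly why the bounded-kernel results of \cite{Fasen05,Fasen06,Rootzen78} fail, and it forces a quantitative estimate on the modulus of continuity of $x\mapsto Y_2(t,x)$ together with a tail bound at a fixed point. Here the technical condition \eqref{eq:q} in dimension $d=1$ enters: it provides the extra integrability $M_q(1)<\infty$ needed to bound the moments of the truncated small-atom field and to guarantee that the supremum over $A$ does not inflate the tail. The plan is to cover $\ov A$ by a fine grid, bound $Y_2$ at the grid points via Markov's inequality applied to a suitable fractional moment, and control the oscillation between grid points using the Hölder-type regularity of $g$; combining these with the one-big-jump estimate for $Y_1$ and the matching lower bound yields the claimed equivalence.
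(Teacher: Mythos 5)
Your architecture is a by-hand re-derivation of the general theorem the paper simply cites as a black box: Rosiński--Samorodnitsky \cite{Rosinski93}, Thm.~3.1, which says that for an infinitely divisible field with a.s.\ finite supremum over $A$, the tail of $\sup_{x\in A}Y(t,x)$ is equivalent to the tail of the Lévy measure of the supremum functional (here exactly $\ov\eta_A$, as you correctly identify) whenever that tail is subexponential. The paper therefore only verifies two hypotheses: (a) $\sup_{x\in A}Y(t,x)<\infty$ a.s., and (b) subexponentiality of $\ov\eta_A$, the latter coming from Lemma~\ref{lem:etaA} together with the (extended) regular variation of $\ov\tau$ and $\ov\la$. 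Re-proving the one-big-jump principle is legitimate in principle, but your plan for the decisive step --- showing $\p(\sup_{x\in A}Y_2 > \delta r)=o(\ov\eta_A(r))$ for the small-atom field via a grid plus a Hölder modulus of continuity --- genuinely fails for $d\geq2$. Increments of the field have \emph{infinite second moment} at the borderline integrability \eqref{eq:lambda2d}: by the paper's own computation $\int_{\R^d}g(s,y)^2\,\dd y = 2^{-d}(\pi\kappa)^{-d/2}s^{-d/2}$, and $\int_0^t s^{-d/2}\,\dd s=\infty$ for $d\geq2$; truncating atoms at level $\eps r$ does not cure this, since all the small singular atoms remain in $Y_2$. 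The only moments available are fractional ones of order $2/d\leq1$, which by subadditivity yield an increment bound of order at best $|x-x'|^{2/d}$, whereas a Kolmogorov/chaining argument in spatial dimension $d$ needs an exponent exceeding $d$, and $2/d\leq d$ for $d\geq2$. Indeed, under \eqref{eq:lambda2d} alone the field $x\mapsto Y(t,x)$ need not even be continuous: continuity is only known when $\int_{(0,1]}z^p\,\la(\dd z)<\infty$ for some $p<\tfrac2d$ strictly \cite{CDH}, so no modulus estimate can exist in general.

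The paper sidesteps chaining in $d\geq2$ entirely by positivity and monotonicity: there \eqref{eq:lambda2d} forces summable jumps, so $Y$ is the nonnegative sum \eqref{eq:defY3}, and since $g(s,\cdot)$ is radially decreasing one has the pathwise domination $\sup_{x\in A}Y(t,x)\leq X_{B(r)}(t)+Y^*$ with $A\subseteq B(r)$ and $Y^*$ distributed as the big-jump part at a fixed point, both a.s.\ finite (Theorem~\ref{thm:existence-X}); that is all \cite{Rosinski93} needs. Your chaining plan with \eqref{eq:q} is, in substance, exactly how the paper handles the one case where chaining is both needed and possible, namely the compensated part $Y_s'$ in $d=1$ (via the moment inequality of \cite{MR}, the kernel estimate of \cite{SLB98}, and \cite{Khos09}), where $\int_0^t s^{-1/2}\,\dd s<\infty$ rescues the second-moment bound. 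One further slip: your ``easy'' lower bound $\sup_{x\in A}Y\geq$ single-atom supremum is only valid when all atoms are nonnegative, i.e.\ $d\geq2$; in $d=1$ the compensated small-jump part can be negative, so even the lower bound requires the one-big-jump conditioning (or the cited theorem) rather than a pointwise comparison.
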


\brem As the proof shows, even without the assumptions of Lemma~\ref{lem:etaA} (i) and (ii), the statement of  Theorem~\ref{cor:I-1-tail} continues to hold provided $\ov\eta_A$ is subexponential. We were not able to prove or disprove this in general. 
\erem

\section{Growth rate} \label{sect:growth}

In what follows we assume \eqref{eq:lambda2d}. %Therefore, there is no need for compensation and $Y(t,x)$ has the form in \eqref{eq:defY3}.
For $r > 0$ and $0 \leq r_1 < r_2$, we write
\begin{equation} \label{eq:defV}
  V(r)  = \biggl\{ (s,z) \colon \frac{z}{(2\pi \kappa s)^{d/2}} > r, \ s \leq t \biggr\}, \quad V(r_1, r_2) = \biggl\{ (s,z) \colon \frac{z}{(2\pi \kappa s)^{d/2}} \in (r_1, r_2],
\ s \leq t\biggr\}.
\end{equation}
Recalling \eqref{eq:deftau} we have for $r > 1$
\beq\label{eq:recall}( \mathrm{Leb} \times \lambda) ( V(r)) = 
\overline \tau (r).
\eeq
Note that $\overline \tau(r)$ is a continuous strictly decreasing 
function, with $\overline \tau (\infty) = 0$ and $\overline \tau(0+) =  \infty$
whenever $\lambda((0,1)) = \infty$.
If $m_{2/d}(\lambda) < \infty$, then by \eqref{eq:tau-form}
\begin{equation} \label{eq:Vbound}
\overline \tau(r) 
  \leq  r^{-2/d} (2\pi\kappa)^{-1}m_{2/d}(\lambda).
\end{equation}
From \eqref{eq:tau-form} we further see that whenever 
$\int_{(0,1]} z^{2/d} \,\lambda(\dd z) = \infty$
we have for any $r > 0$
\begin{equation} \label{eq:infinite}
\sup_{y: |x-y| \leq r} Y(t,y) = \infty.
\end{equation}
Therefore, our standing assumption \eqref{eq:lambda2d} is optimal 
for $d \geq 3$ and almost optimal for $d =2$.
For a more general result in this direction, see  \cite[Thm.~3.3]{CDH}. 
Furthermore, by    \cite[Thm.~3.1]{CDH},
if $\int_{(0,1]} z^p \,\lambda(\dd z) < \infty$ for some $p < \frac2d$,
then for any fixed $t$ the function $x \mapsto Y(t,x)$ is a.s.~continuous.

If
$\int_0^\infty z^{2/d}\, \lambda(\dd z) = m_{2/d}(\lambda) < \infty$,
the non-Gaussian analogue of \eqref{eq:Gauss} (see also \cite[Thm.~1.2]{Conus13}) reads as 
follows.

\begin{theorem} \label{thm:limsup}
Let $f: (0,\infty) \to (0,\infty)$ be nondecreasing and assume that \eqref{eq:lambda2d} holds.
%If $m_{2/d}(\lambda) < \infty$,
If $d=1$, further assume \eqref{eq:q}. Then almost surely
\[
\limsup_{x \to \infty} 
\frac{\sup_{|y| \leq x} Y(t,y)}{f(x)} = \infty
\qquad \text{or} \qquad 
\limsup_{x \to \infty} 
\frac{\sup_{|y| \leq x} Y(t,y)}{f(x)} = 0,
\]
according to whether the following integral diverges or converges:
\begin{equation} \label{eq:int-cond}
\int_1^\infty r^{d-1}\ov \tau(f(r)) \,\dd r.
\end{equation}
\end{theorem}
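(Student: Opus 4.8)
The plan is to recast the statement as a Borel--Cantelli integral test along a geometric sequence of radii and then transfer it to all $x$ by monotonicity. Fix the dyadic radii $x_n=2^n$ and the dyadic annuli $A_n=\{y:2^{n-1}<|y|\le 2^n\}$, so that $|A_n|\asymp 2^{nd}$. Since both $f$ and $x\mapsto\sup_{|y|\le x}Y(t,y)$ are nondecreasing, it suffices to prove the dichotomy along $x=2^n$: if $\sup_{|y|\le 2^n}Y(t,y)\le\eps f(2^{n-1})$ eventually for every $\eps>0$ then $\limsup=0$, and if $\sup_{|y|\le 2^n}Y(t,y)>Mf(2^n)$ infinitely often for every $M>0$ then $\limsup=\infty$. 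A first, purely analytic, step is to record the equivalence
\[
\int_1^\infty r^{d-1}\ov\tau(f(r))\,\dd r<\infty\iff\sum_{n\ge1}2^{nd}\ov\tau(f(2^n))<\infty,
\]
which follows from the monotonicity of $f$ and $\ov\tau$ by comparing the integral over $[2^{n-1},2^n]$ with its dyadic endpoints (a reindexing absorbs the shift between $f(2^{n-1})$ and $f(2^n)$). Crucially, the extended regular variation of $\ov\tau$ from Theorem~\ref{thm:Xtail}(i) shows that replacing $f$ by $cf$ for a constant $c>0$ leaves convergence of this sum unchanged, so the constants $\eps$ and $M$ are harmless.

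For the convergence half I would apply the first Borel--Cantelli lemma on the annuli. Cover $A_n$ by $O(2^{nd})$ unit cubes $Q$; by spatial stationarity of $Y(t,\cdot)$ and the tail asymptotics of the local supremum (Theorem~\ref{cor:I-1-tail} together with Lemma~\ref{lem:etaA}(i), valid since $m_{2/d}(\lambda)<\infty$), there is $r_0$ with $\p(\sup_{y\in Q}Y(t,y)>r)\le 2\,\ov\tau(r)$ for all $r\ge r_0$ and every unit cube $Q$. A union bound then gives $\p(\sup_{y\in A_n}Y(t,y)>r)\le C2^{nd}\ov\tau(r)$ for $r\ge r_0$. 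Taking $r=\eps f(2^{n-1})\to\infty$ and summing, convergence of $\int_1^\infty r^{d-1}\ov\tau(f(r))\,\dd r$ forces $\sum_n\p(\sup_{A_n}Y>\eps f(2^{n-1}))<\infty$, so the annular suprema obey $\sup_{A_n}Y\le\eps f(2^{n-1})$ for all large $n$. Writing $\sup_{|y|\le 2^n}Y$ as the maximum of these annular suprema (plus a fixed finite contribution from $|y|\le1$) and using that $f\to\infty$ in this regime (otherwise $\ov\tau(f(\cdot))$ is bounded below and the integral diverges), I obtain $\sup_{|y|\le 2^n}Y\le\eps f(2^{n-1})$ eventually, hence $\limsup=0$.

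For the divergence half I would use the second Borel--Cantelli lemma, exploiting the independence furnished by the Poisson structure. Let $\ov X_{A_n}(t)=\sup\{(2\pi\kappa(t-\tau_i))^{-d/2}\zeta_i:\tau_i\le t,\ \eta_i\in A_n\}$ be the largest single-atom peak with base point in $A_n$; these are independent across $n$ since they depend on the restrictions of $\mu$ to disjoint spatial regions, and by \eqref{eq:tail-barX} their tails are $\p(\ov X_{A_n}(t)>r)=1-e^{-|\ov A_n|\ov\tau(r)}$. Divergence of the integral, via the sum--integral equivalence, extended regular variation, and $1-e^{-x}\ge c\min(x,1)$, yields $\sum_n\p(\ov X_{A_n}(t)>Mf(2^n))=\infty$, so by independence $\ov X_{A_n}(t)>Mf(2^n)$ for infinitely many $n$. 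It then remains to convert a large peak into a large value of $Y$ at its base point $\eta_{i^\ast}\in A_n\subset\{|y|\le2^n\}$. For $d\ge2$ the solution \eqref{eq:defY3} is a sum of nonnegative terms, so $Y(t,\eta_{i^\ast})\ge(2\pi\kappa(t-\tau_{i^\ast}))^{-d/2}\zeta_{i^\ast}>Mf(2^n)$, and $\limsup\ge M$ for every $M$ follows at once.

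The main obstacle is this last step in dimension $d=1$, where small jumps require compensation and $Y$ is no longer a sum of nonnegative terms. There $Y(t,\eta_{i^\ast})$ equals the peak $(2\pi\kappa(t-\tau_{i^\ast}))^{-1/2}\zeta_{i^\ast}$ plus a background field (the solution with atom $i^\ast$ removed), which may be negative, so I must bound the negative fluctuations of $Y$ uniformly over the relevant balls and show they are $o(f(2^n))$ along the good subsequence. This is exactly where assumption \eqref{eq:q} enters: an integrability exponent $q<2$ for the small jumps gives enough moments for the compensated small-jump part to have slowly growing infimum over $\{|y|\le2^n\}$, negligible against the surviving peaks. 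The same uniform control of the background underlies the clean upper bound $\p(\sup_{Q}Y>r)\le2\,\ov\tau(r)$ used in the convergence half, so establishing these quantitative, uniform tail and fluctuation estimates for the non-sign-definite field is the technical heart of the argument.
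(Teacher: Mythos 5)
Your overall architecture coincides with the paper's: an annular decomposition (your dyadic annuli versus the paper's unit annuli $B(n,n+1)$ is immaterial), first Borel--Cantelli with a union bound over unit cubes and extended regular variation of $\ov\tau$ to absorb the constants $\eps,M$ for the convergence half, and for the divergence half independent single-atom events plus second Borel--Cantelli, with $Y$ evaluated at the atom's base point using positivity of \eqref{eq:defY3} when $d\geq2$ — your event $\{\ov X_{A_n}(t)>Mf(2^n)\}$ is exactly the paper's event $A_n$ in \eqref{eq:An-def}. Your diagnosis of the $d=1$ difficulty is also the paper's: it handles the compensated background through the decomposition \eqref{eq:dec}, the uniform bound \eqref{eq:smalltail} obtained from \eqref{eq:q}, and the observation that $\ov\tau(r)\gtrsim r^{-2}$ forces the surviving peaks to be at least of order $x^{1/2}$, so the background peaks are of lower order; you only sketch this, but the mechanism you name is the right one.

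There is, however, a genuine gap in your convergence half: you derive the per-cube bound $\p(\sup_{y\in Q}Y(t,y)>r)\le 2\,\ov\tau(r)$ from Theorem~\ref{cor:I-1-tail} together with Lemma~\ref{lem:etaA}(i), ``valid since $m_{2/d}(\lambda)<\infty$''. That moment condition is \emph{not} a hypothesis of Theorem~\ref{thm:limsup}, which assumes only \eqref{eq:lambda2d} (a small-jump condition) plus \eqref{eq:q} when $d=1$; indeed the remark following the theorem explicitly applies it to heavy tails $\ov\la\in\mathcal{RV}_{-\al}$ with $\al<\frac2d$, where $m_{2/d}(\la)=\infty$. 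Moreover, Theorem~\ref{cor:I-1-tail} is itself only proved under the hypotheses of Lemma~\ref{lem:etaA}(i) or (ii), i.e.\ either $m_{2/d}(\la)<\infty$ or regular variation of $\ov\la$; if $\ov\la$ is neither light-tailed nor regularly varying, the asymptotics you invoke are simply unavailable, while the theorem still asserts the dichotomy. The paper avoids this by never using the exact tail of the local supremum: for $d\geq2$ it bounds the cube supremum by the monotone Poisson functional $X_{[0,1]^d}(t)$ (in $d=1$, by $X$ plus a background term controlled via \eqref{eq:dec} and \eqref{eq:smalltail}), whose tail is given \emph{exactly} by \eqref{eq:tail-barX}, so that Theorem~\ref{thm:Xtail}(ii) — valid under \eqref{eq:lambda2d} alone — yields $\p(X_{[0,1]^d}(t)>r)\le 2\,\ov\tau(r)$ for large $r$. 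Since you only need an upper bound of order $\ov\tau(r)$, not an asymptotic equivalence, replacing your appeal to Theorem~\ref{cor:I-1-tail} by this pathwise domination repairs the argument; patching instead via Lemma~\ref{lem:etaA}(ii) (where the limiting constant is a Gaussian integral comparable to, but different from, $\lvert\ov A\rvert$) would cover the regularly varying case only, still short of the stated generality.
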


The result says that there is no proper normalization.
If $m_{2/d}(\la)<\infty$, then almost surely there are infinitely many peaks in $B(x) = \{ y: |y| \leq x \}$ 
that are larger than $x^{d^2/2} (\log x)^{d/2}$ but only finitely many that are larger
than $x^{d^2/2} (\log x)^{d/2+\varepsilon}$.

\begin{remark}
If the L\'evy measure is small in the sense that $m_{2/d}(\lambda) < \infty$,
then the large peaks of $Y(t,x)$ are caused by points very close to the time 
$t$. (If we remove jumps close to $(t,x)$, this is equivalent to removing the 
singularity of $g$ in \eqref{eq:defY}. The local spatial supremum of the resulting process 
would have a finite moment of order $\frac 2d$. In particular, its tail probability would be 
$o(r^{-2/d})$, which by the arguments of the proof of the theorem implies that the peaks 
will be of smaller order.)
However, if $\overline \lambda \in \mathcal{RV}_{-\alpha}$ with $\alpha < \frac2d$,
then the peaks are caused by large jumps, which are not necessarily close to $t$. 
Indeed, assume that the integral in \eqref{eq:int-cond} diverges.
For some $\delta \in (0,t)$ and large $K > 0$ define
\[
\widetilde A_n = 
\left\{ \mu( \{ (s,y,z)  \colon s \leq t- \delta,\  |y| \in [n,n+1],\ z > K f(n) \} ) 
\geq 1 \right\}.
\]
Then % by \eqref{eq:phi-asy}, 
as $n \to \infty$,
\[
 \p ( \widetilde A_n ) \sim C n^{d-1} \overline \lambda ( f(n)) \sim 
C n^{d-1} \overline \tau(f(n)),
\]
showing that $\sum_{n=1}^\infty \p(\widetilde A_n) = \infty$. 
By the second  Borel--Cantelli 
lemma $\widetilde A_n$ occurs infinitely often.
\end{remark}

In line with the previous remark we show in our next and final result that the 
largest peaks of $x\mapsto Y(t,x)$ are typically \emph{not} attained at integer locations 
if $m_{2/d}(\la)<\infty$. To this end, introduce the process
\beq\label{eq:Y0}Y_0(t,x)=\begin{dcases} \int_0^t \int_\R \int_{(0,\infty)} g(t-s,x-y)\bone_{\{\lvert x-y\rvert\leq \frac12,\ g(t-s,x-y)z>1\}}\,\mu(\dd s,\dd y,\dd z),&\text{if } d=1,\\  \int_0^t \int_\R \int_{(0,\infty)} g(t-s,x-y)\bone_{\{\lvert x-y\rvert\leq \frac12\}}\,\mu(\dd s,\dd y,\dd z),&\text{if } d\geq2,\end{dcases} \eeq
which is infinitely divisible with Lévy measure
$$ \eta_0(B)=\nu(\{(s,y,z): s\leq t,\ \lvert y\rvert\leq \tfrac12,\ g(s,y)z\in B\cap (\bone_{\{d=1\}},\infty)\}).$$

\bthm\label{thm:peakZd}
	Let $f: (0,\infty) \to (0,\infty)$ be nondecreasing and assume that \eqref{eq:existence-cond} holds.
	%If $m_{2/d}(\lambda) < \infty$,
	If $d=1$, further assume \eqref{eq:q}. Then 
	\begin{align*}
	\int_1^\infty r^{d-1}\ov \eta(f(r)) \,\dd r<\infty &\implies	\limsup_{x \to \infty} 
	\frac{\max_{y\in\Z^d, |y| \leq x} Y(t,y)}{f(x)} = 0,\\
	\int_1^\infty r^{d-1}\ov \eta_0(f(r)) \,\dd r=\infty&\implies \limsup_{x \to \infty} 
	\frac{\max_{y\in\Z^d, |y| \leq x} Y(t,y)}{f(x)} = \infty.
	\end{align*}
\ethm

The result above is optimal if   $\overline \eta(r) \asymp \overline \eta_0(r)$
(i.e., $0 < \liminf_{r \to \infty} \tfrac{\overline \eta_0(r)}{\overline \eta(r)}
\leq \limsup_{r \to \infty} \tfrac{\overline \eta_0(r)}{\overline \eta(r)}  < \infty$).
We end with a sufficient condition for the asymptotic equivalence of $\ov \eta$ and $\ov 
\eta_0$ and an example where they are not.

\blem\label{lem:eta0}
(i) If $m_{1 +2/d}(\lambda) < \infty$, or if
there exist $\delta > 0$ and $C>0$ such that for $r,y>1$ large enough
\begin{equation} \label{eq:lambda-cond}
\frac{\overline \lambda(ry)}{\overline \lambda (r)} \leq C y^{-\delta},
\end{equation}
then 
$\overline \eta_0(r) \asymp \overline \eta(r)$ as $r\to\infty$.

(ii) Under the assumptions of Lemma \ref{lemma:eta-tail} (iii) we have $\ov\eta_0(r)=o(\ov\eta(r))$ as $r\to\infty$.
\elem

\section{Proofs} \label{sect:proofs}

\subsection{Proofs for Section \ref{sect:intro}}

\begin{proof}[Proof of Theorem \ref{thm:existence}]
We start with the part (ii). By standard results on Poisson integrals (see e.g.~\cite[Thm. 2.7]{Kyprianou}), the integral in \eqref{eq:uncomp} exists a.s.~iff
\[
\trint  ( 1 \wedge g(s,y) z  ) \,\dd s \,\dd y \,\lambda( \dd z) < \infty, 
\]
where  
$\trint = \int_0^t \int_{\R^d} \int_{(0,\infty)}$.
For any $u > 0$
\begin{equation} \label{eq:g-level}
\begin{split}
 g(s,y) \leq u \iff &~ 
s \geq (2 \pi \kappa u^{2/d})^{-1} =: H_1(u)   
\quad \text{or}\quad \Bigl\{ s \in (0, H_1(u)) \\
&\qquad \text{ and } |y| \geq  \sqrt{ - \kappa d s \log
( 2 \pi \kappa s u^{2/d} ) }
= \sqrt{\kappa d s \log (H_1(u)/s) }=: H_2(s,u)\Bigr\}.
\end{split}
\end{equation}
Note that if $z \leq (2 \pi \kappa t)^{d/2} =: D$, then $H_1(1/z) \leq t$.
Let 
\[
\begin{split}
A_1 & = \{ (s,y,z) : z \leq D, s \leq H_1(1/z), |y| \leq H_2(s,1/z) \}, \\
A_2 & = \{ (s,y,z) : z > D, s \leq t, |y| \leq H_2(s,1/z) \}
\end{split}
\]
and 
\[
\begin{split}
B_1 & = \{ (s,y,z) : t \geq s > H_1(1/z) \}, \\
B_{21} & = \{ (s,y,z) : z \leq D, s \leq H_1(1/z), |y| > H_2(s,1/z) \}, \\
B_{22} & = \{ (s,y,z) : z > D, s \leq t, |y| > H_2(s,1/z) \}.
\end{split}
\]
Then $A_1, A_2, B_1, B_{21}, B_{22}$ form a partition of
$(0,t] \times \R^d \times (0,\infty)$. Moreover, by \eqref{eq:g-level},
$1 \leq  g(s,y) z$ iff $(s,y,z) \in A_1 \cup A_2$.

Consider the upper incomplete gamma function
$
\Gamma( s, x ) = \int_x^\infty u^{s-1} e^{-u}\, \dd u
$.
For $r \leq H_1(1/z)$, by a change of variables $v = \log (H_1(1/z) /s)$,
\begin{equation} \label{eq:H2-calc-aux}
\begin{split}
& \int_0^r H_2(s,1/z)^d\, \dd s 
 = \int_0^r \left( \kappa d s \log \frac{H_1(1/z)}{s} \right)^{d/2}\, \dd s \\
& \qquad= (\kappa d)^{d/2} H_1(1/z)^{d/2 +1} \, \int_{\log \frac{H_1(1/z)}{r}}^\infty
e^{-v(d/2 +1)} v^{d/2}\, \dd v \\
& \qquad= (\kappa d)^{d/2} \left( \frac{2}{d+2} \right)^{d/2 +1} H_1(1/z)^{d/2 +1}
\Gamma ( \tfrac d2 +1, (\tfrac d2 +1) \log (H_1(1/z) /r) ).
\end{split}
\end{equation}
Therefore, on $A_1$, after simplifying the constant,
\[
\begin{split}
\trint_{A_1}   ( 1 \wedge g(s,y) z  )\, \dd s\, \dd y\, \lambda(\dd z) & = 
\int_{(0,D]} \int_0^{H_1(1/z)} v_d H_2(s, 1/z)^d\, \dd s\, \lambda(\dd z) \\
& = 
\frac{d^{d/2}}{\pi \kappa (d+2)^{d/2 +1}}
\int_{(0,D]} z^{1 + 2/d} \,\lambda(\dd z),
\end{split}
\]
where $v_d = \pi^{d/2} / \Gamma(\frac d2 +1)$ is the volume of the unit 
ball $B(1)$. We see 
that this integral is finite iff 
$\int_{(0,1]} z^{1+2/d} \,\lambda(\dd z) < \infty$.
On $A_2$,
\[
\begin{split}
& \trint_{A_2}   ( 1 \wedge g(s,y) z  )\, \dd s\, \dd y\, \lambda(\dd z) \\
& \qquad= \frac{d^{d/2}}{\Gamma(\frac d2 +1) \pi \kappa(d+2)^{d/2 +1}}
\int_{(D, \infty)}
z^{1+2/d} 
\Gamma \biggl(\tfrac d2 +1, (\tfrac d2+1) \log \frac{z^{2/d}}{2 \pi \kappa t} \biggr) \,
\lambda(\dd z).
\end{split}
\]
Since
$\Gamma(\frac d2+1, u) \sim e^{-u} u^{d/2}$ as $u \to \infty$,   
\[
z^{1+2/d} \Gamma \biggl(\tfrac d2 +1, (\tfrac d2+1) \log \frac{z^{2/d}}{2 \pi \kappa t} \biggr) 
\sim (2 \pi \kappa t)^{d/2 +1} (1 + \tfrac 2d)^{d/2} 
\left( \log z \right)^{d/2},
\]
as $z \to \infty$,
which implies that 
\[
\trint_{A_2}  ( 1 \wedge g(s,y) z  )\, \dd s \,\dd y\, \lambda( \dd z)
< \infty \iff \int_{(1,\infty)} ( \log z)^{d/2}\, \lambda(\dd z) < \infty.
\] 

On $B_1$,
\[
\trint_{B_1} g(s,y) z \,\dd s\, \dd y\, \lambda(\dd z) = 
\int_{(0,D]} z (t - H_1(1/z))\, \lambda(\dd z),
\]
which is finite iff $\int_{(0,1]} z\, \lambda( \dd z) < \infty$.
For any $ h > 0$,
\begin{equation} \label{eq:B21-aux}
\int_{|y| > h} g(s,y) \,\dd y  =
d v_d (2 \pi \kappa s)^{-d/2} \int_{h}^{\infty}   e^{-\frac{r^2}{2 \kappa s}}
r^{d-1} \,\dd r = \frac{\Gamma  ( \frac{d}{2}, \frac{h^2}{2 \kappa s}  )}{\Gamma(\tfrac d2) }. 
\end{equation}
Furthermore, for any $a > 0$,
\begin{equation} \label{eq:B21-aux2}
\int_0^a \Gamma ( \tfrac{d}{2}, \tfrac{d}{2} \log \tfrac{a}{s}  ) \,\dd s
= a  \Gamma(\tfrac d2)  ( 1 - (1 + \tfrac 2d)^{-d/2}  ).
\end{equation}
Therefore, by \eqref{eq:B21-aux} and \eqref{eq:B21-aux2},
\[
\trint_{B_{21}} g(s,y) z\, \dd s \,\dd y \,\lambda(\dd z) = 
(2 \pi \kappa)^{-1}  ( 1 - (1 + \tfrac 2d)^{-d/2}  )
\int_{(0,D]} z^{1+2/d}\, \lambda(\dd z).
\]
Finally, on $B_{22}$, we use \eqref{eq:B21-aux} and the asymptotics 
$\Gamma(\frac d2, u) \sim e^{-u} u^{d/2- 1}$ to obtain that 
\[
\trint_{B_{22}} g(s,y) z\, \dd s\, \dd y \,\lambda(\dd z) < \infty \iff \int_{(1,\infty)} (\log z)^{d/2 -1} \,\lambda(\dd z) < \infty.
\]

By \cite[Thm.\ 2.7 (ii)]{Kyprianou} the characteristic function of the 
integral in \eqref{eq:uncomp} is
\begin{equation} \label{eq:ch-Z}
\begin{split}
%\E e^{\ii \theta Z(t,x) } & = 
\exp \left\{ 
- \trint  ( 1 - e^{\ii \theta g(s,y) z}  ) \,\dd s\, \dd y\, \lambda(\dd z) 
\right\} 
 = \exp \left\{ 
- \int_{(0,\infty)} 
 ( 1 - e^{\ii \theta u}  ) \,\eta (\dd u) \right\}.
\end{split}
\end{equation}

To prove the existence of $Y(t,x)$ 
defined as a compensated integral, we  
use the stochastic integration theory of  \cite{Rajput89}.
%Define the usual small-jump -- large-jump decomposition of $Y(t,x)$ as
By writing
\begin{equation} \label{eq:defY2}
\begin{split}
Y(t,x) & = m t + \int_0^t \int_{\R^d} \int_{(0,1]} g(t-s, x-y) z \, 
(\mu - \nu)(\dd s, \dd y, \dd z) \\
& \quad + 
\int_0^t \int_{\R^d} \int_{(1,\infty)} g(t-s, x-y) z \, 
\mu (\dd s, \dd y, \dd z) \\
& =: mt +  Y_s(t,x) + Y_b(t,x)
\end{split}
\end{equation}
and 
the previously proved existence result for $Y_b(t,x)$, it is enough to 
deal with $Y_s(t,x)$, that is, we may assume that there are only small jumps.
Spelling out \cite[Thm.\ 2.7]{Rajput89} to our  setting, we 
obtain that $Y(t,x)$ exists iff
\begin{equation} \label{eq:RR-cond1}
\int_0^t\int_{\R^d}\int_{(0,1]}
  g(s,y) z\bone(g(s,y) z > 1) \,  \dd s \, \dd y\,\lambda(\dd z)< \infty,
\end{equation}
and 
\begin{equation} \label{eq:RR-cond2}
\int_0^t \int_{\R^d} \int_{(0,1]}   ( 1 \wedge  g(s,y)^2 z^2  ) \,\dd s\,
 \dd y \,\lambda(\dd z ) < \infty.
\end{equation}
To check \eqref{eq:RR-cond1}, as in \eqref{eq:B21-aux} write
\[
\int_{|y| \leq h} g(s,y)\, \dd y = \frac{1}{\Gamma(\frac d2)}
\int_{0}^{\frac{h^2}{2 \kappa s}} e^{-u} u^{d/2 -1} \,\dd u.
\]
Thus, as in \eqref{eq:B21-aux2},
\[
\int_{0}^{H_1(1/z)} \int_0^{H_2^2(s,1/z) / (2 \kappa s)}
e^{-u} u^{d/2-1}\, \dd u\, \dd s 
= H_1(1/z)  ( 1 + \tfrac 2d  )^{-d/2} \Gamma(\tfrac d2),
\]
which gives that 
\[
\int_0^t\int_{\R^d}\int_{(0,1]} 
g(s,y) z\bone(g(s,y) z > 1) \,  \dd s \, \dd y\,\lambda(\dd z)
=  ( 1 + \tfrac2d  )^{-d/2} 
\int_{(0,1]} z H_1(1/z) \,\lambda(\dd z).
\]
The latter integral exists iff
$\int_{(0,1]} z^{1+2/d} \,\lambda (\dd z) < \infty$.

For \eqref{eq:RR-cond2}, by the previous calculations, we only have to deal with the 
integral on $B_1 \cup B_{21}$. 
As 
\[
\int_{\R^d} g(s,y)^2\, \dd y= 
2^{-d} (\pi \kappa)^{-d/2} 
s^{-d/2},
\]
we obtain that 
\[
\trint_{B_1} g(s,y)^2 z^2 \,\dd s \,\dd y \,\lambda(\dd z) 
< \infty
\]
iff the second part of \eqref{eq:existence-cond} holds.
Finally, for $h > 0$,
\[
\int_{|y| > h} g(s,y)^2\, \dd y = 
\frac{\Gamma(\frac d2, \frac{h^2}{\kappa s})}
{2^{d} (\pi \kappa s)^{d/2} \Gamma(\frac d2)},
\]
and for $a > 0$,
\[
\int_0^a s^{-d/2} \Gamma(\tfrac d2, d \log \tfrac as)\, \dd s 
= a^{1-d/2} \Gamma(\tfrac d2)  
\frac{2(\frac12+\frac1d)^{-d/2}-2}{d-2},
\]
where the last fraction is $\frac12$ if $d=2$.
Thus, 
\[
\trint_{B_{21}}  g(s,y)^2  z^2 \,\dd s  \,\dd y\, \lambda(\dd z)
=  2^{-d} (\pi \kappa) ^{-d/2}\frac{2(\frac12+\frac1d)^{-d/2}-2}{d-2}
\int_{(0,1]} z^2 H_1(1/z)^{1-d/2} \, \lambda (\dd z), 
\]
which is finite iff
$\int_{(0,1]} z^{1+2/d}\, \lambda(\dd z) < \infty$. In summary, \eqref{eq:RR-cond1}
and \eqref{eq:RR-cond2} hold iff \eqref{eq:existence-cond} holds.

By \cite[Thm.\ 2.7 (iv)]{Rajput89}, the characteristic function of $Y_s(t,x)$ is
\begin{multline*}
\E [e^{\ii \theta Y_s(t,x) } ]
  =
\exp \biggl\{ -\ii \theta 
\int_0^t \int_{\R^d} \int_{(0,1]} \bone(g(s,y) z > 1)
 g(s,y) z   \, \dd s\, \dd y\,\lambda(\dd z) \\
  + \int_0^\infty \Bigl( 
e^{\ii \theta u} - 1 - \ii \theta (u \wedge 1) \Bigr) \,
\eta(\dd u) \biggr\}.
\end{multline*}
Combining with \eqref{eq:ch-Z}, we obtain \eqref{eq:charfunc}.
\end{proof}

\subsection{Proofs for Section \ref{sect:tail}}

\begin{proof}[Proof of Lemma \ref{lemma:eta-form}]
By \eqref{eq:g-level} and \eqref{eq:H2-calc-aux} and Fubini's theorem, we have
\beq\label{eq:etabar}
\begin{split}
 \overline \eta(r)
&=  \int_{(0,\infty)}  \int_0^{H_1(r/z) \wedge t} 
v_d H_2(s,r/z)^d \,\dd s \,\lambda(\dd z)
\\
& = 
\int_{(0,\infty)} 
\frac{ d^{d/2}}{\pi \kappa (d + 2)^{d/2 +1} \Gamma(\frac d2+1)} 
\left( \frac{z}{r} \right)^{1+2/d} 
%\\ & \hspace*{3cm} \times
\Gamma \left( \tfrac{d}{2} +1,  ( \tfrac{d}{2} +1  ) 
\log \frac{H_1(r/z)}{H_1(r/z) \wedge t} \right) 
\lambda(\dd z) \\
& = r^{-(1 + 2/d)}
\frac{ d^{d/2}}{\pi \kappa (d + 2)^{d/2 +1} \Gamma(\frac d2+1)} 
\int_0^\infty e^{-u} u^{d/2} M_{1+2/d}(Dr e^{ud/(d+2)}) \,\dd u,
\end{split}
\eeq
proving the exact formula for $\overline \eta(r)$. 
\end{proof}

\begin{proof}[Proof of Lemma \ref{lemma:eta-tail}]
(i) If $m_{1+2/d}(\lambda) < \infty$,   the asymptotic result follows 
immediately from \eqref{eq:eta-tail}.

(ii) Integration by parts gives for any $\gamma > 0$ that
\begin{equation} \label{eq:intpart}
M_{\gamma}(r) = 
\int_{(0,r]} z^{\gamma} \,\lambda(\dd z) = \int_0^r \gamma u^{\gamma -1}
\overline \lambda ( u ) \,\dd u - r^{\gamma } \overline \lambda(r).
\end{equation}
Thus, as $r \to \infty$, we have by Karamata's theorem (see \cite[Prop.\ 1.5.8 and 1.5.9a]{BGT}) that for  $\gamma > \alpha$,  
\begin{equation} \label{eq:lamb-int}
M_\gamma(r) \sim 
r^{\gamma} \overline \lambda(r)  \frac{\alpha}{\gamma - \alpha}
= r^{\gamma - \alpha} \ell(r) \frac{\alpha}{\gamma - \alpha},
\end{equation}
while for $\gamma = \alpha$,
\begin{equation} \label{eq:lamb-int2}
M_{\alpha}(r) \sim \alpha \int_1^r  {\ell(y)}y^{-1} \,\dd y = \alpha L(r),
\end{equation}
where $L$ is slowly varying and $L(r) / \ell(r) \to \infty$ as $r \to \infty$.

By \eqref{eq:lamb-int} with $\gamma = 1+\frac2d$
(or \eqref{eq:lamb-int2} for $\alpha = 1 + \frac 2d$)
and properties of slowly varying functions, %we obtain
\[
\begin{split}
M_{1+2/d}(r) \int_0^\infty e^{-u} u^{d/2} 
\frac{M_{1+2/d}(Dr e^{ud/(d+2)})}{M_{1+2/d}(r)}\, \dd u 
& \sim 
M_{1+2/d}(r) 
\int_0^\infty e^{-u} u^{d/2}  ( De^{ud/(d+2)}  )^{1+2/d -\alpha}\, \dd u \\
& = M_{1+2/d}(r) D^{1+2/d -\alpha} 
\left( \frac{d+2}{\alpha d} \right)^{d/2 +1 }  \Gamma(\tfrac d2 +1),
\end{split}
\]
where the use of Lebesgue's dominated convergence theorem is justified 
by Potter's bounds.

(iii)  For $\alpha=0$ the truncated moment $M_{1+2/d}$ is not necessarily
regularly varying, therefore more care is needed.  
First we analyze $L_0$, which is finite by \eqref{eq:existence-cond} and satisfies,
for any  large $K$,
\[
L_0(r) \geq \int_1^K \ell(ry) y^{-1} (\log y)^{d/2-1}\, \dd y \sim 
\ell(r) \int_1^K y^{-1} (\log y)^{d/2 -1} \,\dd y.
\]
Since the latter integral goes to infinity as $K \to \infty$, we obtain that 
$L_0(r) / \ell(r) \to \infty$ as $r \to \infty$. Next, for $a > 1$,
\[
L_0(ar) = \int_a^\infty \ell(ry) y^{-1} ( \log y/a)^{d/2-1} \,\dd y,
\]
thus 
\[
L_0(r) - L_0(ar) = \int_{1}^a \ell(ry) y^{-1} ( \log y)^{d/2-1} \,\dd y
+ \int_a^\infty \ell(ry) y^{-1} \Bigl( 
( \log y)^{d/2-1} - ( \log y/a)^{d/2-1} \Bigr)\, \dd y,
\]
which implies 
\[
\lim_{r \to \infty} \frac{L_0(r) - L_0(ar)}{L_0(r)} = 0,
\]
that is, $L_0(r)$ is indeed slowly varying. Furthermore, for any $a > 1$,
\begin{equation} \label{eq:L0-aux}
L_0(r) \sim \int_{a}^\infty \ell(ry) y^{-1} (\log y)^{d/2-1}\, \dd y, \qquad 
 r \to \infty.
\end{equation}

Next we turn to $\overline \eta(r)$. Changing variables 
$y = Dr e^{ud/(d+2)}$ in \eqref{eq:eta-tail}, we obtain
\[
\overline \eta (r) =
\frac{D^{1+2/d}}{d \pi \kappa \Gamma(\frac d2+1)} 
\int_{Dr}^\infty y^{-2-2/d}  \left( \log \frac{y}{Dr} \right)^{d/2}
M_{1+2/d}(y) \,\dd y.
\]
By Fubini's theorem,
\[
\begin{split}
 \int_r^\infty 
y^{-2-2/d}  \left( \log y/r \right)^{d/2} M_{1+2/d}(y)\, \dd y &= \int_{(0,r]} z^{1+2/d} \,\lambda(\dd z)\, r^{-1-2/d} \int_{1}^\infty u^{-2-2/d}
(\log u)^{d/2}\, \dd u \\
& \qquad  + 
\int_{(r,\infty)} z^{1+2/d} 
r^{-1-2/d} \int_{z/r}^\infty u^{-2-2/d} (\log u)^{d/2}\, \dd u\, \lambda(\dd z) \\
& = r^{-1-2/d} \int_{(0,\infty)} z^{1+2/d} f(1 \vee z/r) \,\lambda(\dd z),
\end{split}
\]
where $a \vee b = \max \{ a, b\}$ and
$
f(y) = \int_{y}^\infty u^{-2-2/d} (\log u)^{d/2}\, \dd u.
$
Using the fundamental theorem of calculus to write $z^{1+2/d}f(1\vee z/r)$ as an integral, exchanging the two resulting integrals by Fubini's theorem, and changing variables  $y = z/r$, we obtain
\[
\begin{split}
& r^{-1-2/d} 
\int_{(0,\infty)} z^{1+2/d} f(1 \vee z/r) \,\lambda(\dd z)  \\
&\qquad = 
\int_0^\infty \overline \lambda(ry) \Bigl[ 
(1 + \tfrac 2d) y^{2/d} f(1 \vee y) - \bone(y > 1) (\log y)^{d/2} y^{-1} \Bigr] \,\dd y \\
& \qquad= \int_0^1 \overline \lambda(ry) (1 + \tfrac 2d) y^{2/d} f(1)\, \dd y \\
& \qquad\qquad + \int_1^\infty \overline \lambda(ry) (1 + \tfrac 2d) y^{2/d} 
\int_y^\infty u^{-2-2/d} \Bigl[ (\log u)^{d/2} - (\log y)^{d/2} \Bigr]\, \dd u\, \dd y.
\end{split}
\]
Using that
\[
(1 + \tfrac 2d) y^{2/d} 
\int_y^\infty u^{-2-2/d} \Bigl[ (\log u)^{d/2} - (\log y)^{d/2} \Bigr]\, \dd u
= \tfrac{d}{2} y^{2/d} \int_{y}^\infty v^{-2-2/d} (\log v)^{d/2 -1} \,\dd v,
\]
we end up with
\begin{equation} \label{eq:eta-taila0}
\begin{split}
\overline \eta (r/D) 
= \frac{D^{1+2/d}}{d \pi \kappa \Gamma(\frac d2+1)} 
\bigg[ & \int_{0}^1 \overline \lambda(ry) (1+\tfrac 2d) y^{2/d} f(1)\, \dd y \\
& + \int_1^\infty \overline \lambda(ry) \tfrac{d}{2} y^{2/d} \int_y^\infty 
v^{-2-2/d} (\log v)^{d/2-1}\, \dd v \,\dd y \bigg].
\end{split}
\end{equation}
As $y \to \infty$,
\[
y^{2/d} \int_y^{\infty} v^{-2-2/d} (\log v)^{d/2 -1} \,\dd v
\sim (1 + \tfrac 2d)^{-1} y^{-1} (\log y)^{d/2 - 1},
\]
so for $K$ large enough,
\[
\begin{split}
 \int_K^\infty \overline \lambda(ry) y^{2/d} \int_{y}^\infty 
v^{-2-2/d} (\log v)^{d/2 -1} \,\dd v \,\dd y 
& \sim 
(1 + \tfrac 2d)^{-1} \int_K^\infty \overline \lambda(ry) y^{-1} (\log y)^{d/2 - 1} \,\dd y\\
&\sim (1 + \tfrac 2d)^{-1} L_0(r),
\end{split}
\]
where the last asymptotic follows from \eqref{eq:L0-aux}.
Since $\ell(r) /L_0(r) \to 0$ as $r \to \infty$, 
\[
\overline \eta (r/D) 
\sim \frac{D^{1+2/d}}{2 \pi \kappa \Gamma(\frac d2+1) ( 1 + \frac 2d)} 
L_0(r), 
\]
as claimed.
\end{proof}

\begin{proof}[Proof of Theorem~\ref{thm:Ytail}]
(i) Starting from the first line of \eqref{eq:etabar}, we can also write $\ov\eta(r)$ as 
\beq\label{eq:etabar-2}	\begin{split}
\ov\eta(r)&=v_d(\kappa d)^{d/2}\int_0^t s^{d/2}\int_{(r(2\pi\kappa s)^{d/2},\infty)}
\biggl(\log \frac{z^{2/d}}{2\pi\kappa sr^{2/d}}\biggr)^{d/2}\,\la(\dd z)\,\dd s\\
&=\frac{(2t)^{1+d/2}\kappa^{d/2}v_d}{d}r^{-1-2/d}\int_0^r v^{2/d} 
\int_{(D v,\infty)} \biggl(\log \frac{z}{D v}\biggr)^{d/2}\,\la(\dd z)\,\dd v,
\end{split}\eeq
where we changed variables $v=(s/t)^{d/2}r$ to go from the first to the second line.
By the fundamental theorem of calculus we have 
$\log \ov \eta(r)=C+\int_1^r \xi(v)v^{-1}\,\dd v$ with
\begin{align*} C&=\log \frac{(2t)^{1+d/2}\kappa^{d/2}v_d}{d} + \log \int_0^1 v^{2/d} 
\int_{(D v,\infty)} \biggl(\log \frac{z}{D v}\biggr)^{d/2}\,\la(\dd z)\,\dd v,\\
\xi(v)&=\frac{v^{1 + 2/d} \int_{(D v,\infty)}  (\log 
\frac{z}{D v} )^{d/2}\,\la(\dd z)}{\int_0^v u^{2/d} 
\int_{(D u,\infty)}  (\log \frac{z}{D u} )^{d/2}\,\la(\dd z) \,\dd u}-
(1+\tfrac2d). 
\end{align*}
Since $u\mapsto  \int_{(D u,\infty)}  
(\log \frac{z}{D u} )^{d/2}\,\la(\dd z)$ is decreasing in $u$, we have 
$-(1+\frac 2d)\leq \xi(v)\leq 0$. The claim now follows from \cite[Thm.~2.2.6]{BGT}. 

(ii) By (i) and \cite[Thm.\ 2.0.7]{BGT}, $\ov\eta$ has dominated variation \cite[p.\ 
54]{BGT} and   $\ov\eta(r+s)/\ov \eta(r)\to1$ as $r\to\infty$ for any $s>0$. Hence $\ov\eta$ 
is subexponential \cite[Thm.~1]{Goldie78} and \eqref{eq:barY} follows from 
\eqref{eq:tail-barY} and \cite[Thm.~3.1]{Pakes} (see also \cite[Thm.~5.1]{Pakes2}).

(iii)
By the representation theorem of regularly varying functions 
$\overline \eta \in \mathcal{RV}_{-\alpha}$
iff $\lim_{r \to \infty} \xi(r) = 1 + \frac 2d -\alpha$. 
%Clearly, $\alpha \in [0,1+2/d]$. 
By Karamata's theorem
(\cite[Thm.~1.6.1]{BGT}) this holds for $\alpha < 1+\frac2d$ iff
$f \in \mathcal{RV}_{-\alpha}$, where
\beq\label{eq:f}
\begin{split}
f (r) = \int_{(r,\infty)} \biggl( \log \frac{z}{r} \biggr)^{d/2} \,\lambda(\dd z) 
= \frac{d}{2} \int_{r}^\infty \frac{\overline \lambda(z)}{z} 
\biggl( \log \frac{z}{r} \biggr)^{d/2-1}\, \dd z.
\end{split}
\eeq
Consider  the kernel $k(u) = (\log u^{-1})^{d/2 -1}\bone_{(0,1)}(u)$. Define the 
\emph{Mellin convolution} of $f_1$ and $f_2$  by
\[
f_1 \stackrel{\text{M}}{*} f_2 (r) = \int_0^\infty f_1(r/t) f_2(t) t^{-1}\, \dd t,
\]
see e.g.~\cite[Sect.\ 4]{BGT}.
With this notation $f(r) = k \stackrel{\text{M}}{*} \overline \lambda (r)$.
The \emph{Mellin transform} of $k$, that is,
\[
\breve{k}(z) = \int_0^1 t^{-z-1} (\log t)^{d/2 -1}\, \dd t
= \ii^d \sqrt{z}^{-d} \Gamma(\tfrac d2),
\]
is defined and nonzero whenever $\Re z < 0$ and $\sqrt{z}$ is chosen such that 
$\mathrm{arg}(\sqrt{z}) \in (\frac14 \pi, \frac 34\pi)$. Therefore, we can apply 
 \cite[Thm.~4.8.3]{BGT}. (It is easy to check that the Tauberian condition 
is satisfied since $\overline \lambda$ is decreasing; see also 
\cite[Exercise 1.11.14]{BGT}.) Therefore, 
$ k \stackrel{\text{M}}{*} \overline \lambda \in \mathcal{RV}_{-\alpha}$ implies 
that $\overline \lambda \in \mathcal{RV}_{-\alpha}$, as claimed. The other direction was proved in Lemma~\ref{lemma:eta-tail}.

If $\alpha = 1 + \frac2d$, then $\lim_{r\to\infty} \xi(r)=0$ iff
\[
\frac{r^{1+2/d} f(r)}{\int_0^r u^{2/d} f(u) \,\dd u } \to 0,
\]
which holds iff $\int_0^r u^{2/d} f(u)\, \dd u$ is slowly varying, see 
\cite[Thm.\ 8.3.1]{BGT} or \cite[Thm.\ 1.1]{Kevei}. Using the first identity in \eqref{eq:f}, we can easily verify that the latter  holds if $m_{1+2/d}(\lambda) < \infty$.
\end{proof}

\subsection{Proofs for Section \ref{sect:spatial}}

\begin{proof}[Proof of Theorem \ref{thm:existence-X}] Without loss of generality,  assume $\kappa=\frac{1}{2\pi}$. If $d=1$, then
by \cite[Thm.\ 2.7]{Kyprianou} $X_A(t)$ exists iff
\[
\trint    \bone(s^{-d/2}z>1,\ y \in \ov A)   \,\dd s\, \dd y\, \lambda(\dd z)
= \lvert \ov A\rvert  \int_{(0,\infty)}  (z^{2}\wedge t)\, \lambda(\dd z)
<\infty,
\]
which is true for any Lévy measure. If $d\geq2$, $X_A(t)$ exists iff
\[
\trint  ( 1 \wedge s^{-d/2} z \bone(y \in \ov A)  ) \,\dd s\, \dd y\, \lambda(\dd z)
= \lvert \ov A\rvert \int_0^t \int_{(0,\infty)} 
( 1 \wedge s^{-d/2} z  ) \,\dd s\, \lambda(\dd z)
< \infty.
\]
For $z \leq t^{d/2}$,
we have 
$ \int_0^t (1 \wedge s^{-d/2} z)\, \dd s = z^{2/d} + z \int_{z^{2/d}}^t s^{-d/2} \,\dd s$,
while for $z > t^{d/2}$, we have
$
 \int_0^t (1 \wedge s^{-d/2} z)\, \dd s = t
$.
Thus,
\[
 \int_0^t \int_{(0,\infty)} 
 ( 1 \wedge s^{-d/2} z  ) \,\dd s\, \lambda(\dd z)  = 
\int_{(0,t^{d/2}]} z^{2/d} \,\lambda(\dd z) + 
\int_{(0,t^{d/2}]} z \int_{z^{2/d}}^t s^{-d/2} \,\dd s \,\lambda(\dd z) 
+ t \,\overline \lambda(t^{d/2}),
\]
which is finite iff
\eqref{eq:lambda2d} holds.
The identity \eqref{eq:chf-X} follows from
\cite[Thm.~2.7 (ii)]{Kyprianou}.
\end{proof}

\begin{proof}[Proof of Lemma \ref{lemma:tau-tail}]
(i) is an immediate consequence of \eqref{eq:tau-form}. (ii) follows from 
\eqref{eq:tau-form} combined with Karamata's theorem.
\end{proof}

\bpr[Proof of Theorem~\ref{thm:Xtail}]
Recall \eqref{eq:tail-barX}. Then, as in Theorem~\ref{thm:Ytail}, claims (i) and (ii) 
follow by writing $\log \ov \tau(r)=C+\int_1^r \xi(u)u^{-1}\,\dd u$ with
$$ 
C= - \log (\pi \kappa d)  + 
\log \int_0^D u^{2/d-1} \ov \la(u )\,\dd u, \qquad 
\xi(u) = \frac{ (Du)^{2/d}\ov 
\la(Du)}{\int_0^{Du} v^{2/d-1}\ov\la(v)\,
\dd v}-\tfrac2d, $$
where $\xi$ satisfies   $-\frac 2d<\xi(u)\leq 0$. For (iii), we have as in Theorem~\ref{thm:Ytail} that $\overline \tau \in \mathcal{RV}_{-\alpha}$
iff $\lim_{r \to \infty} \xi(r) = 2/d - \alpha$, which
for $\alpha < \frac 2d$  holds iff $\overline \lambda \in \mathcal{RV}_{-\alpha}$,
as claimed. If $\alpha = \frac2d$ then
using \cite[Thm.\ 8.3.1]{BGT}
$\lim_{r \to \infty} \xi(r) = 0$ iff 
$\int_0^r u^{2/d -1} \overline \lambda(u) \,\dd u$ is slowly varying. This holds  if 
$m_{2/d} (\lambda) < \infty$ or $\overline \lambda \in \mathcal{RV}_{-2/d}$.
\epr

\bpr[Proof of Lemma~\ref{lem:etaA}]

(i) If $m_{2/d}(\la)<\infty$, choose $\eps>0$ and observe that for $r>1$,
\beq\label{eq:help}\begin{split} \ov\eta_A(r) &\leq  \trint \bone( (2\pi\kappa s)^{-d/2} z >r)\bone_{\{y\in A^\eps\}}\,\dd s\,\dd y\,\la(\dd z)\\
 &\qquad+\trint \bone\Bigl((2\pi\kappa s)^{-d/2} e^{-\frac{\mathrm{dist}(y,A)^2}{2\kappa s}} z>r\Bigr)\bone_{\{y\notin A^\eps\}}\,\dd s\,\dd y\,\la(\dd z)\\
&\leq \lvert A^\eps\rvert \ov\tau(r)+\trint \bone\Bigl((2\pi\kappa s)^{-d/2} e^{-\frac{\mathrm{dist}(y,A)^2}{2\kappa s}} z>r\Bigr)\bone_{\{y\notin A^\eps\}}\,\dd s\,\dd y\,\la(\dd z). %r^{-2/d} m_{2/d}(\la) \trint (2\pi\kappa s)^{-1} e^{-\frac{\mathrm{dist}(y,A)^2}{d\kappa s}}\bone_{\{y\notin A^\eps\}}\,\dd s\,\dd y
\end{split}\eeq
where $A^\eps=\{x\in \R^d: \mathrm{dist}(x,A)<\eps\}$. Since $\trint (2\pi\kappa s)^{-1} e^{-\frac{\mathrm{dist}(y,A)^2}{d\kappa s}}\bone_{\{y\notin A^\eps\}}z^{2/d}\,\dd s\,\dd y\,\la(\dd z)<\infty$, the last term in the previous display is $o(r^{-2/d})$, which together with Lemma~\ref{lemma:tau-tail} (i) shows that $\limsup_{r\to\infty}\ov\eta_A(r)/\ov\tau(r) \leq\lvert A^\eps\rvert$, which converges to $\lvert \ov A\rvert$ by letting $\eps\to0$. The opposite relation  follows from the fact that
$$ \ov \eta_A(r)\geq  \trint \bone( (2\pi\kappa s)^{-d/2} z >r)\bone_{\{y\in \ov A\}}\,\dd s\,\dd y\,\la(\dd z) =\lvert \ov A\rvert \ov\tau(r).$$

If $\ov\la(r)=r^{-2/d}\ell(r)$, one can use Potter's bounds, dominated convergence and Lemma~\ref{lemma:tau-tail} (ii) to show that the last integral in \eqref{eq:help} is
$$ \sim r^{-2/d} \ell(r)\trint (2\pi\kappa s)^{-1} e^{-\frac{\mathrm{dist}(y,A)^2}{d\kappa s}}\bone_{\{y\notin  A^\eps\}}\,\dd s\,\dd y = o(\ov\tau(r)). $$
The remaining proof is the same as in the case $m_{2/d}(\la)<\infty$.

(ii) If $\ov\la(r)=r^{-\al}\ell(r)$ for some $\al\in(0,\frac2d)$, a direct calculation shows that for $r>1$,
\begin{align*} \ov\eta_A(r) &=\trint \bone\Bigl((2\pi\kappa s)^{-d/2} e^{-\frac{\mathrm{dist}(y,A)^2}{2\kappa s}} z>r\Bigr)\,\dd s\,\dd y\,\la(\dd z) \\
&\sim r^{-\al}\ell(r)\int_0^t\int_{\R^d}(2\pi\kappa s)^{-\al d/2}
e^{-\frac{\alpha \mathrm{dist}(y,A)^2}{2\kappa s}} 
\,\dd s\,\dd y.\qedhere\end{align*}
\epr

\bpr[Proof of Theorem~\ref{cor:I-1-tail}] 
Note that for $d \geq 2$
condition \eqref{eq:lambda2d} implies summable jumps, in which
case we assume that $Y(t,x)$ has the form \eqref{eq:defY3}. 
For $d = 1$, note that $Y(t,x)=Y'_d(t,x)+Y'_s(t,x)+Y'_b(t,x)$, where
\beq\label{eq:dec}\begin{split} 
Y'_d(t,x)&=mt+\trint g(t-s,x-y)z(\bone_{\{(2\pi\kappa(t-s))^{-1/2}z\leq 1\}}-\bone_{\{z\leq 1\}})\,\dd s\,\dd y\,\la(\dd z),\\
Y'_s(t,x)&= \trint g(t-s,x-y)z\bone_{\{(2\pi\kappa(t-s))^{-1/2}z\leq 1\}} \,(\mu-\nu)(\dd s,\dd y,\dd z),\\
Y'_b(t,x)&= \trint g(t-s,x-y)z\bone_{\{(2\pi\kappa(t-s))^{-1/2}z> 1\}} \, \mu(\dd s,\dd y,\dd z). \end{split}\eeq
A straightforward computation shows that $Y'_d(t,x)<\infty$ for all Lévy measures $\la$. 
Furthermore, by \eqref{eq:q} and the proof of \cite[Thm.\ 3.8]{islands_mult}
one can show that
\beq\label{eq:smalltail} \P\biggl(\sup_{x\in A}\, \lvert Y'_s(t,x)\rvert<\infty\biggr)= 
1. \eeq
For completeness, we 
sketch the proof. We use  \cite[Thm.~1]{MR}
(with $\alpha = p = 2$) and Minkowski's integral inequality to obtain
\begin{align*}
&\E[\lvert Y_s'(t,x)-Y_s'(t,x')\rvert^2] \\
& \qquad
\leq C 
\trint \lvert g(t-s,x-y)-g(t-s,x'-y)\rvert^2  
z^2 \bone_{\{(2 \pi \kappa (t-s))^{-1/2}z<1\}}\, \nu (\dd s, \dd y, \dd z)
\end{align*} 
for all $x,x'\in\R$. 
We have on the set $(2 \pi \kappa (t-s))^{-1/2}z<1$
\begin{align*}
\lvert g(t-s,x-y)-g(t-s,x'-y)\rvert^2 z^2
&=C(t-s)^{-1} z^2\Bigl\lvert 
e^{-\frac{\lvert x-y\rvert^2}{2 (t-s)}} - 
e^{-\frac{\lvert x'-y\rvert^2}{2 (t-s)}} 
\Bigr\rvert^2\\
&\leq C\lvert g((t-s),x-y)-g((t-s),x'-y)\rvert^q z^q,
 \end{align*}
where $q$ is the exponent from \eqref{eq:q} (which satisfies $q<2$). With this 
estimate and again  \cite[Lemme~A2]{SLB98}, we conclude that
$$ 
\E[\lvert Y_s'(t,x)-Y_s'(t,x')\rvert^2]\leq 
C \lvert x-x'\rvert^{3-q}.
$$
Since $3-q >1$,  it follows from \cite[Thm.~4.3]{Khos09} that
$$ \E\biggl[\sup_{x\in A} Y_s'(t,x)^2\biggr]\leq \E[Y_s'(t,0)^2] + 
\E\biggl[\sup_{x,x'\in A}\lvert Y_s'(t,x)-Y_s'(t,x')\rvert^2\biggr]<\infty,  $$
which shows   
\eqref{eq:smalltail}. 

Next,  choose $r>0$ such that $A\subseteq B(r) = \{ x \in \R^d\colon \lvert x\rvert\leq r \}$. Then
%In order to treat all dimensions at the same time, let $Y'_b(t,x)=Y(t,x)$ if $d\geq2$. 
\begin{align*} \sup_{x\in A} Y'_b(t,x) &\leq \trint (2\pi\kappa(t-s))^{-d/2}  z\bone_{\{y\in B(r)\}}\bone_{\{ d\geq2 \text{ or } (2\pi\kappa(t-s))^{-1/2}z>1\}} \,\mu(\dd s,\dd y,\dd z)\\
 &\quad+\trint (2\pi\kappa(t-s))^{-d/2} e^{-\frac{\mathrm{dist}(y,B(r))^2}{2\kappa(t-s)}} z\bone_{\{y\notin B(r),\ d\geq2 \text{ or } (2\pi\kappa(t-s))^{-1/2}z>1\}}\,\mu(\dd s,\dd y,\dd z).
\end{align*}
%where $A^\eps=\{x\in \R^d: \mathrm{dist}(x,A)<\eps\}$. 
The first term on the right-hand side is simply $X_{B(r)}(t)$, which is finite a.s.\ by Theorem~\ref{eq:lambda2d}. The second term has the same distribution as $Y'_b(t,0)$, which is finite a.s.\ as well. Therefore, $\sup_{x\in A} Y(t,x) <\infty$ a.s.\ for all $d$.
%Since $s\mapsto (2\pi\kappa s)^{-d/2} e^{- {\eps^2}/({2\kappa s})}$ is continuous at $s=0$, it is easy to verify that the second integral has a finite moment of order $\frac2d$ if $m_{2/d}(\la)<\infty$. In the case $\al=\frac2d$, the arguments we give below for part (ii) will show that the second integral has a tail of order $O(\ov \la(r))$. In particular, both if  $m_{2/d}(\la)<\infty$ and if $\al=\frac2d$, the tail of the second integral  is $o(\ov\tau(r))$; cf.\ Lemma~\ref{lemma:tau-tail}. Therefore the tail is dominated by the first integral, which in fact is nothing else but $X_{A^\eps}(t)$. We conclude from Theorem~\ref{thm:Xtail} that 
%$$  \limsup_{r\to\infty} \frac{\P(\sup_{x\in A} Y'_b(t,x)>r)}{\ov\tau(r)} \leq \lvert A^\eps\rvert, $$
%which implies the upper bound by letting $\eps\to0$. For the lower bound, simply use Theorem~\ref{thm:Xtail} and the fact that $$\sup_{x\in A} Y(t,x)\geq \ov X_A(t)\bone_{\{\ov X_A(t)>1\}}.$$ 
The assertion of the theorem now follows from Lemma~\ref{lem:etaA} (which implies that $\ov\eta_A$ is subexponential under   the stated assumptions) and \cite[Thm.~3.1]{Rosinski93}.
\epr

\subsection{Proofs for Section \ref{sect:growth}}

For $0<r<r'$ let 
$B(r, r') = \{ x \in \R^d\colon r < \lvert x\rvert \leq r' \}$.

\begin{proof}[Proof of Theorem~\ref{thm:limsup}]%[Proof of Proposition \ref{prop:limsupinf-1}]
First assume that   \eqref{eq:int-cond} converges and let $K>0$. We start with $d\geq2$. Since   $B(n,n+1)$ can be covered with $O(n^{d-1})$ many unit cubes and $Y$ is stationary in space,  Theorem~\ref{thm:Xtail} shows that 
\beq\label{eq:bound2}\begin{split}
	\P\Biggl(\sup_{y\in B(n,n+1)} Y(t,y)>\frac{f(n)}{K}\Biggr)&\leq Cn^{d-1} \P\Biggl(\sup_{y\in [0,1]^d} Y(t,y)>\frac{f(n)}{K}\Biggr)\\
&\leq Cn^{d-1}\P\biggl(X_{[0,1]^d}(t)> \frac{f(n)}{K}\biggr)\\
&\leq 2Cn^{d-1} \ov \tau(f(n)/K) \leq   C' n^{d-1}\ov\tau(f(n)),
\end{split}\eeq
which is summable by hypothesis. 
In the last inequality we also used that $\overline \tau$ is extended regularly 
varying. So  by the first Borel--Cantelli lemma, 
\[ \sup_{y\in B(n,n+1)} Y(t,y) > \frac{f(n)}{K}\]
only happens finitely often and hence
\[ \limsup_{x\to\infty} \frac{\sup_{\lvert y\rvert\leq x} Y(t,y)}{f(x)}\leq K^{-1}\]
almost surely, proving the claim since $K$ was arbitrary. If $d=1$, recall the decomposition \eqref{eq:dec}. We can apply \eqref{eq:bound2} to $Y'_b(t,x)$, while $\lvert Y'_d(t,x)+Y'_s(t,x)\rvert$ has a smaller tail by \eqref{eq:smalltail} (in $d=1$, the tail of $Y'_b(t,x)$ is no smaller than $Cr^{-2}$ by Lemma~\ref{lemma:tau-tail} (i)). Therefore, the final bound in \eqref{eq:bound2} remains true.

For the converse statement, assume that the integral in \eqref{eq:int-cond} diverges.  If $d=1$, we consider again the decomposition \eqref{eq:dec}. As before, we let $Y'_b(t,x)=Y(t,x)$ if $d\geq2$.
For $K >0$ large consider the events
\begin{equation} \label{eq:An-def}
A_n = \{ \mu  (\{ (s,y,z): \, (s,z) \in V(Kf(n+1)),\ y \in B(n,n+1) \}) \geq 1 \},
\quad n \geq 1.
\end{equation}
By \eqref{eq:recall} and Theorem~\ref{thm:Xtail} (i),
\[
\P(A_n)\sim 
v_d  ((n+1)^d - n^d)\ov\tau(Kf(n+1)) \geq  Cn^{d-1}\ov\tau(f(n+1)).
\]
Since the integral in \eqref{eq:int-cond} diverges, we have that
$\sum_{n=1}^\infty \p (A_n ) = \infty$. Noting that the $A_n$'s are independent, the 
second Borel--Cantelli lemma implies that $A_n$ occurs infinitely often.
On $A_n$,
\[
\sup_{y \in B(n, n+1)} Y'_b(t,y) \geq    K f(n+1).
\]
Thus, almost surely,
\[
\limsup_{x \to \infty} \frac{\sup_{|y| \leq x} Y'_b(t,y)}{f(x)} \geq 
K,
\]
which proves the claim for $d\geq2$ since $K > 0$ is arbitrarily large.

If $d=1$, note that the proof above shows that $Y'_b(t,x)$ develops infinitely many peaks larger than $x^{1/2}$ on $B(x)$ (because $\ov\tau(r)$ decreases no faster than shown in Lemma~\ref{lemma:tau-tail} (i)). So if we show that  $\lvert Y'_d(t,x)\rvert+\lvert Y'_s(t,x)\rvert$ from \eqref{eq:dec} can only have finitely many peaks of that size, then the    proof in $d=1$ will be complete. For $\lvert Y'_d(t,x)\rvert$, this is trivial. For $\lvert Y'_s(t,x)\rvert$, this is a simple consequence of \eqref{eq:smalltail} and the arguments in the first part of the proof.
\end{proof}

\begin{proof}[Proof of Theorem~\ref{thm:peakZd}] The upper bound proof is essentially the same as for Theorem~\ref{thm:limsup}, except that \eqref{eq:bound2} should be replaced by
\[\begin{split}
\P\Biggl(\max_{y\in\Z^d, y\in B(n,n+1)} Y(t,y)>\frac{f(n)}{K}\Biggr)&\leq Cn^{d-1} 
\P\Biggl(Y(t,0)>\frac{f(n)}{K}\Biggr)\\
&\leq Cn^{d-1} \ov \eta(f(n)/K) \leq   C' n^{d-1}\ov\eta(f(n)).
\end{split}\]
For the lower bound proof, if $d=1$, we consider the decomposition $Y(t,x)=At + Y''_s(t,x)+Y''_{b}(t,x)$, where $A$ is the same constant as in Theorem~\ref{thm:existence} and
\begin{align*}
	Y''_s(t,x)&=\trint g(t-s,x-y)z\bone_{\{g(t-s,x-y)z\leq1\}} \,(\mu-\nu)(\dd s,\dd y,\dd z),\\
	Y''_{b}(t,x)&=\trint g(t-s,x-y)z\bone_{\{g(t-s,x-y)z>1\}} \,\mu(\dd s,\dd y,\dd z).
\end{align*}
If $d\geq2$, we   let $Y''_{b}(t,x)=Y(t,x)$.
Clearly,
$Y''_{b}(t,x)\geq Y_0(t,x)$ from \eqref{eq:Y0} and $\P(Y_0(t,x)>r)\sim \ov\eta_0(r)$ similarly to Theorem~\ref{thm:Ytail}. Because the $(Y_0(t,x))_{x\in\Z^d}$ are independent and
\begin{align*} \sum_{n=1}^\infty \sum_{y\in \Z^d\cap B(n,n+1)}\P (Y_0(t,y)> Kf(n+1) ) &\geq C \sum_{n=1}^\infty  n^{d-1}\ov\eta_0(f(n+1)K)=\infty,\end{align*}
the second Borel--Cantelli lemma shows that 
$Y''_b(t,x)/f(x)\geq Y_0(t,x)/f(x)\geq K$ for 
infinitely many $x\in\Z^d$. If $d=1$, then as in the proof of Theorem~\ref{thm:limsup} one 
can show that the peaks of $\lvert Y''_s(t,x)\rvert$ are of lower order.
\end{proof}

\begin{proof}[Proof of Lemma~\ref{lem:eta0}]
Recall $H_1$ and $H_2$ from \eqref{eq:g-level}. For $r>1$ 
	\[
	\overline \eta_0(r) = \eta_0((r,\infty)) 
	= \int_{(0,\infty)} \int_0^{H_1(r/z) \wedge t} v_d 
	( \tfrac{1}{2} \wedge H_2(s,r/z) )^d\, \dd s\, \lambda(\dd z).
	\]
	For fixed $u > 0$ the map   $s\mapsto H_2(s, u)$
	is increasing on $[0,(2\pi \kappa e u^{2/d})^{-1}]$, and decreasing on
	$[(2\pi \kappa e u^{2/d})^{-1}, H_1(u)]$, with global maximum
	$H_2((2\pi \kappa e u^{2/d})^{-1}, u) = \sqrt{d / (2 \pi e)} u^{-1/d}$.
	In particular, $H_2(s,u) \leq \frac12$ whenever $u \geq ( 2d / (\pi e))^{d/2}$.
	Therefore, as in the proof of Lemma~\ref{lemma:eta-form},
	\[
	\begin{split}
		& \int_{(0, (\pi e/(2d))^{d/2} r]} 
		\int_0^{H_1(r/z) \wedge t} v_d 
		( \tfrac{1}{2} \wedge H_2(s,r/z) )^d \,\dd s\, \lambda(\dd z) \\
		&\qquad = 
		\frac{ d^{d/2}}{\pi \kappa (d + 2)^{d/2 +1} \Gamma(\frac d2+1)} 
		\int_{(0, (2\pi e/d)^{d/2} r]} \biggl( \frac z r\biggr)^{1+2/d}
		\Gamma \biggl( \tfrac{d}{2} +1,  ( \tfrac{d}{2} +1  ) 
		\log \frac{H_1(r/z)}{H_1(r/z) \wedge t} \biggr)\, 
		\lambda(\dd z) \\
		&\qquad \geq c_1 r^{-1-2/d} M_{1+2/d} ( c_2 r).
	\end{split}
	\]
	At the same time, if $u> 0$ is small enough, then
	\[
	\int_0^{H_1(u) \wedge t} 
	( \tfrac{1}{2} \wedge H_2(s,u) )^d \,\dd s \geq \frac{t}{3}.
	\]
	Thus there exists $c_3$ such that 
	\[
	\int_{(c_3 r, \infty)} 
	\int_0^{H_1(r/z) \wedge t} v_d 
	( \tfrac{1}{2} \wedge H_2(s,r/z) )^d \,\dd s \,\lambda(\dd z) 
	\geq c_4 \overline \lambda(c_3 r).
	\]
	It follows that there are finite constants
	$c_1, c_2, C_1, C_2>0$ depending only on $d$ and $t$ such that
\begin{equation} \label{eq:eta0-bound}
c_1 r^{-1-2/d} M_{1+2/d} ( c_2 r) + 
c_1 \overline \lambda(c_2 r)
\leq
\overline \eta_0(r) \leq 	C_1 r^{-1-2/d} M_{1+2/d} ( C_2 r) + 
C_1 \overline \lambda(C_2 r).
\end{equation}
(The second inequality is an easy consequence of the first two displays in this proof.)
% Similarly, using the second line in \eqref{eq:etabar} and the 
% asymptotic $\Gamma(\frac d2 + 1, r) \sim e^{-r} r^{d/2}$ as $r \to \infty$, 
% together with the fact that $\overline \eta$ is extended regularly varying, 
% we obtain
% \begin{equation} \label{eq:eta-bound}
% \overline \eta(r) \asymp r^{-1 -2/d} M_{1+2/d}(r) + \int_r^\infty 
% \overline \lambda(z) z^{-1} (\log \tfrac{z}{r} )^{d/2 -1} \dd z.
% \end{equation}

From \eqref{eq:eta0-bound} and Lemma \ref{lemma:eta-tail} (i) we see 
that $\overline \eta_0 (r) \asymp \overline \eta(r)$ whenever 
$m_{1+2/d} (\lambda) < \infty$. 
If \eqref{eq:lambda-cond} holds, using 
$\Gamma(\frac d2 + 1, r) \sim e^{-r} r^{d/2}$ as $r \to \infty$, we have 
\[
\begin{split}
& \int_{(rD, \infty)} 
\left( \frac{z}{r} \right)^{1+2/d} 
\Gamma \biggl( \tfrac{d}{2} +1,  ( \tfrac{d}{2} +1  ) 
		\log \frac{H_1(r/z)}{H_1(r/z) \wedge t} \biggr)\,
		\lambda(\dd z) \\
		&\qquad \leq C \int_{(rD, \infty)}  ( \log (z/r)  )^{d/2}\, \lambda (\dd 
z)=   C \int_{rD}^\infty    ( \log (z/r)  )^{d/2-1} 
		\overline \lambda(z) z^{-1} \,\dd z \\
		& \qquad\leq C \overline \lambda(r) \int_{D}^\infty ( \log y )^{d/2-1} 
y^{-1-\delta} \,\dd y 
		= C \overline \lambda(r),
	\end{split}
	\]
	which implies
	$\overline \eta_0(r) \asymp \overline \eta(r)$.

On the other hand,  (ii) follows from
 Lemma \ref{lemma:eta-tail} (iii) and \eqref{eq:eta0-bound}.
\end{proof}

\medskip
\noindent \textbf{Acknowledgments.}
PK's research was supported by the János Bolyai Research Scholarship of the 
Hungarian Academy of Sciences.

\bibliographystyle{abbrv}
\bibliography{as-heat}
 
\end{document}